\documentclass[]{article}
\usepackage{graphicx}
\usepackage{graphics}
\usepackage{amssymb,amsmath,amsthm,epsfig}
\setlength{\textwidth}{6.0in} \setlength{\textheight}{8.5in}
\setlength{\headheight}{.2cm}\setlength{\topmargin}{-0.2cm}
\setlength{\evensidemargin}{0.15 in}
\setlength{\oddsidemargin}{0.15 in}

\newcommand{\h}{\mathcal{H}}
\newcommand{\wJ}{\widetilde P}

\newtheorem{theorem}{Theorem}
\newtheorem{lemma}{Lemma}
\newtheorem{corollary}{Corollary}
\newtheorem{definition}{Definition}
\newtheorem{proposition}{Proposition}

\newtheorem{remark}{Remark}
\setcounter{page}{1}
\newcommand{\norm}[1]{{\left\|{#1}\right\|}}
\newcommand{\R}{\mathbb R}

\newcommand{\ps}{\psi_{n,c}^{(\alpha)}}
\newcommand{\scal}[1]{{\left\langle{#1}\right\rangle}}
\newcounter{reh}
\setcounter{reh}{0}

\newcounter{rek}
\setcounter{rek}{0}

\vspace{5mm}
\begin{document}
		\begin{center}
		{\large {\bf Bochner-Riesz Means Convergence of Prolate Spheroidal Series and Their Extensions}}\\
		\vskip 1cm Mourad Boulsane$^a$  {\footnote{
				Corresponding author: Mourad Boulsane, Email: boulsane.mourad@hotmail.fr}}\& Ahmed Souabni$^a$
		\end{center}
		\vskip 0.5cm {\small
		\noindent $^a$ Carthage University,
		Faculty of Sciences of Bizerte, Department of  Mathematics, Jarzouna, 7021, Tunisia.}
	\begin{abstract}
		In this paper, we study the $L^p$-Bochner-Riesz mean summability problem related to the spectrum of some particular Sturm-Liouville operators in the weighted $L^p([a,b],\omega).$ Our purpose is to establish suitable conditions under which the Bochner-Riesz expansion of a function $f \in L^p([a,b],\omega)$,$1<p<\infty$, in two generalisations of Slepian's basis, converges to $f$ in $L^p([a,b],\omega)$.	
	\end{abstract}
	\vspace{5mm}
	{\it Keywords:} Bochner-Riesz mean convergence, eigenfunctions
	and eigenvalues, prolate spheroidal wave functions.\\[0.3cm]
	{\bf 2010 Mathematics Subject Classification.}  42C10, 41A60.
	
	\section{Introduction}
	The $L^p$-Bochner-Riesz mean convergence of orthogonal series has attracted special attention since several decades ago. This kind of convergence is briefly described as follows. Let 
	$1 \leq p <\infty$ , $a,b \in \mathbb{R}$ and $\{\varphi_n\}$ an orthonormal set of eigenfunctions of a positive self-adjoint differential operator $\mathcal{L} $ associated with eigenvalues $ \chi_n $ on a weighted Hilbert space 
	$L^2(I, \omega)$, where  $\omega$ is a positive bounded weight function. We define the expansion coefficients of $f\in L^p([a,b],\omega)$ by $ a_n(f) = \int_a^b f(x)\varphi_n(x) \omega(x)dx .$
	The orthonormal set $\{\varphi_n\}$ is said to have the Bochner-Riesz mean convergence of order $p$ over the Banach space
	$L^p(I,\omega)$ if for some suitable $\delta>0$ and for all $f \in L^p(I,\omega),$ we have
	\begin{equation}\label{eq-1}
	\lim_{R \to \infty} \int_{a}^{b}|f(x)- \Psi_R^{\delta}f(x) |^p \omega (x) dx =0,\mbox{  where  }\displaystyle\Psi_R^{\delta}f = \sum_{n=0}^{\infty} \Big(1-\frac{\chi_n}{R}\Big)^\delta_+ a_n(f) \varphi_n.
	\end{equation}
	To the best of our knowledge, M. Riesz was the first, in 1911,  to investigate this problem in some special cases.  Our problem is a modified summability method of Riesz mean introduced by Salomon Bochner given by \eqref{eq-1} . In \cite{Boc}, S.Bochner started by studying this problem for the trigonometric exponential case in higher dimension. Furthermore, in \cite{C.G}, the authors have proved a Bochner-Riesz mean convergence for the orthonormal eigenvectors system of a second order elliptic differential operator on a compact N-dimensional manifold M for $1\leq p\leq 2\frac{N+1}{N+3}$ and $\delta>N\left|\frac{1}{p}-\frac{1}{2}\right|-\frac{1}{2}$. Mauceri and Müller have also studied this problem in \cite{Mau} and \cite{Mul}  in the framework of the Heisenberg group. This problem has been analysed for Fourier-Bessel expansions series  in \cite{CIAl} and \cite{CIA2}. Moreover, in \cite{Casarino-M.Peloso}, authors have also solved this question in the case of sublaplacien on the sphere $S^{2n-1}$ in the complex n-dimensional space $\mathbb{C}^n$,where it has been shown that we have convergence for $ \delta > (2n-1)\Big|\frac{1}{2}-\frac{1}{p}\Big|$. The weak type convergence is investigated in this problem. Indeed, we say that an orthonormal family $\{\varphi_n\}$ of $L^p(I,\omega)$ has a weakly Bochner-Riesz mean convergence if $\Psi_R^{\delta}f$ converge to $f$  almost everywhere for every $f\in L^p(I,\omega)$. This problem has been solved in some special cases of orthonormal systems like Jacobi and Laguerre polynomials  in \cite{C.M} and for the eigenfunctions of the Hermite operator in higher dimension in \cite{C.D.H.L.Y}.\\ 
	In this work, we extend the $L^p$-Bochner-Riesz means convergence to  the circular and the generalized (or weighted) prolate spheroidal wave functions denoted by (CPSWFs) and (GPSWFs), respectively. The two last families are defined respectively as the eigenfunctions of the  operators $$ \mathcal H_c^{\alpha}f(x)=\int_0^1
	\sqrt{cxy}J_{\alpha}(cxy) f(y)dy,\quad \mathcal F_c^{(\alpha)} f(x)=\int_{-1}^1 e^{icxy}  f(y) (1-y^2)^{\alpha}\, dy,$$  where $\alpha>-1/2, \, c>0$ are two real numbers. These two sets of orthonormal functions are characterized as solutions of some Sturm-Liouville problems. The second family we consider is the weighted, some times called generalized, prolate spheroidal wave functions (GPSWFs) introduced by Wang-Zhang \cite{Wang2}. Note that the classical PSWFs  correspond to the special case of the GPSWFs with $\alpha=0.$
	
	Our aim in this paper is to prove the $L^p$convergence of Bochner-Riesz mean expansion in the GPSWFs and CPSWFs bases.  \\
	
	This work is organised as follows. In section 2, we give some mathematical preliminaries on Sturm-Liouville theory and some properties of the CPSWfs and GPSWFs. Note that these functions can be considered as generalizations of the spherical Bessel functions $j_{n}^{(\alpha)}$ and Gegenbauer's polynomials $\widetilde P_n^{(\alpha)}$, respectively. In section 3, we state our two main theorems and section 4 and 5 are respectively devoted to the proof of sufficient and necessary conditions of the main results.
	
	\section{Mathematical preliminaries}
	In this paragraph, we give some mathematical preliminaries that will be frequently used in the proofs of the different results of this work.
	\subsection{Some facts about Sturm-Liouville theory}
	The Sturm-Liouville differential  operator is defined as follows, see for example \cite{Sturm-Liouville Theory Past and Present},
	\begin{equation}\label{Sturm1}
	\mathcal{L} y(x)= \frac{d}{dx}[p(x) y'(x)] + q(x) y(x),\quad x\in I=(a,b).
	\end{equation}
	with $r=\frac{1}{p}, q \in L^1(I,\mathbb R).$
	The Sturm-Liouville eigenvalues problem is given by the following differential equation :
	\begin{equation}\label{Sturm2}
	\mathcal{L}.u(x)= -\chi \omega(x) u(x),\quad \sigma \in L^1(I,\mathbb R).
	\end{equation}
	That is
	\begin{equation} \label{S-Lequation}
	\frac{d}{dx}\Big[p(x)\frac{du}{dx} \Big]+q(x)u(x)+\chi\omega(x)u(x)=0,\quad x\in I.
	\end{equation}
	Note that a Sturm-Liouville operator satisfies the following properties,
	
	\begin{enumerate}
		\item $ u \mathcal{L} v-v \mathcal{L} u = \Big[ p(uv'-vu') \Big]' $ ( Lagrange's identity )
		\item The eigenvalues of $\mathcal{L}$ are real and form an infinite countable set $ \chi_0< \chi_1< \cdots <\chi_n <\cdots $ with $ \lim_{n\rightarrow +\infty} \chi_n = +\infty.$
		\item For each eigenvalue $ \chi_n$ there exists an eigenfunction $ \phi_n $ having n zeros on $ [ a,b ]. $
		\item Eigenfunctions corresponding to different eigenvalues are orthogonal with respect to the following inner product
		\begin{equation*}
		\scal{f,g}_{\omega} = \int_a^b f(x)g(x)\omega(x)dx,\quad f, g \in L^2(I, \omega).
		\end{equation*}
		In the sequel, we assume that $\omega(x)\geq 0$, for $x\in (a,b).$
	\end{enumerate}

	\subsection{Some facts about GPSWFs and CPSWFs}	
	We first recall that, for $c>0$, the prolate spheroidal wave functions PSWFs, denoted  $\psi_{n,c} $, have been introduced by D.Slepian as solutions of the following energy maximization problem 
	\begin{equation*}
	\mbox{ Find } f=\arg\max_{f\in B_c}\frac{\int_{-1}^1|f(t)|^2 dt}{\int_{\mathbb{R}}|f(t)|^2 dt},
	\end{equation*}
	where $B_c$ is the classical Paley-Wiener space, defined by
	\begin{equation}
	\label{Bc}
	B_c=\left \{ f\in L^2(\mathbb R),\,\, \mbox{Support } \widehat f\subseteq [-c,c] \right\}.
	\end{equation}
	Here, $\widehat f$ is the Fourier transform of $f\in L^2(\mathbb R).$
	It has been shown that they are also eigenfunctions of the integral operator with sinc kernel. A breakthrough in the theory of Slepian functions is due to Slepian,Pollard and Landau who have proved that PSWFs are also eigenfunctions of a Sturm-Liouville operator by proving a commutativity property. For more details about Slepian's functions we refer reader to \cite{Slepian1,Slepian2,Slepian3}.
	In this work we are interested in two generalizations of the PSWFs.\\
	The first basis is called circular prolate spheroidal wave functions (CPSWFs) or radial part of the 2d-Slepian, introduced by D.Slepian\cite{Slepian3} as solutions of the following problem 
	\begin{equation*}
	\mbox{ Find } f=\arg\max_{f\in HB^\alpha_c}\frac{\int_{0}^1|f(t)|^2 dt}{\int_{0}^\infty|f(t)|^2 dt},
	\end{equation*}
	where $HB^\alpha_c$ is the Hankel Paley-Wiener space, defined by
	\begin{equation}
	\label{HBc}
	HB^\alpha_c=\left \{ f\in L^2(\mathbb R),\,\, \mbox{Support } \mathcal{H}^{\alpha}f\subseteq [-c,c] \right\}.
	\end{equation}
	Here the Hankel transform $ \mathcal{H}^\alpha$ is defined, for $ f\in L^1(0,\infty)$, by
	$$ \mathcal{H}^{\alpha}f(x)=\int_{0}^{\infty}\sqrt{xy}J_\alpha(xy)f(y)dy.$$ 
	Here $J_\alpha(.)$ is the Bessel function and $ \alpha>-1/2$. Like Fourier transform, $\mathcal{H}^\alpha$ can be extended into a unitary operator on $L^2(0,\infty)$.
	They are also the different band-limited eigenfunctions of the finite Hankel transform $\mathcal{H}_c^{\alpha}$ defined on $L^2(0,1)$ with kernel $H_c^{\alpha}(x,y)=\sqrt{cxy}J_{\alpha}(cxy)$ where $J_{\alpha}$ is the Bessel function of the first type and order $\alpha>-\frac{1}{2}$(see for example \cite{Slepian3}). That is 
	\begin{equation}
	\mathcal{H}_c^{\alpha}(\varphi^{\alpha}_{n,c})=\mu_{n,\alpha}(c)\varphi^{\alpha}_{n,c}.
	\end{equation}
	
	In his pioneer work \cite{Slepian3}, D. Slepian has shown that the compact integral operator $\mathcal{H}_c^{\alpha}$ commutes with the following Sturm-Liouville differential operator $\mathcal L^{\alpha}_c$ defined on $C^2([0,1])$ by
	
	\begin{equation}\label{differ_operator1}
	\mathcal{L}_c^{\alpha}(\phi)=- \dfrac{d}{dx} \left[ (1-x^2)\dfrac{d}{dx} \phi \right]+\left(c^2x^2-\dfrac{\dfrac{1}{4}-\alpha^2}{x^2}\right)\phi.
	\end{equation}
	
	Hence,  $\varphi^{\alpha}_{n,c}$ is the $n-$th bounded   eigenfunction of the positive self-adjoint operator $\mathcal{L}_c^{\alpha}$  associated with the eigenvalue $\chi_{n,\alpha}(c),$ that is
	\begin{equation}
	\label{differ_operator2}
	-\dfrac{d}{dx} \left[ (1-x^2)\dfrac{d}{dx} \varphi^{\alpha}_{n,c}(x) \right] + \left(c^2x^2- \dfrac{\dfrac{1}{4}-\alpha^2}{x^2} \right)\varphi^{\alpha}_{n,c}(x)=\chi_{n,\alpha}(c)\varphi^{\alpha}_{n,c}(x),\quad x\in [0,1].
	\end{equation}
	The orthonormal family $\varphi_{n,c}^{\alpha}$ form an orthonormal basis of $L^2(0,1)$ and the associated eigenvalues family $\chi_{n,\alpha}(c)$ satisfy the following inequality, (see \cite{Slepian3})
	\begin{equation}\label{boundschi2}
	(2n+\alpha+1/2)(2n+\alpha+3/2)\leq\chi_{n,\alpha}(c)\leq (2n+\alpha+1/2)(2n+\alpha+3/2)+c^2
	\end{equation} 
	The second family we consider in this work is the weighted, (some times called generalized), prolate spheroidal wave functions introduced by Wang-Zhang \cite{Wang2} as solutions of a Sturm-Liouville problem or equivalently eigenfunctions of an integral operator. GPSWFs are also solutions of the following problem as given in \cite{Karoui-Souabni1}
	\begin{equation*}
	\mbox{Find } f= {\displaystyle arg\max_{f\in B^{\alpha}_c}  \frac{\|f\|^2_{L^2_{\omega_{\alpha}}(I)}}{\|\widehat f\|^2_{L^2(\omega_{-\alpha}(\frac{\cdot}{c}))}} },
	\end{equation*}
	where $\omega_\alpha(x) = (1-x^2)^\alpha$ and $B^{(\alpha)}_c$ is the restricted Paley-Winer space, defined by
	\begin{equation*}
	B_c^{(\alpha)}=\{ f\in L^2(\mathbb R),\,\,  \mbox{Support } \widehat f\subseteq [-c,c],\, \, \widehat f\in L^2\big((-c,c), \omega_{- \alpha}(\frac{\cdot}{c})\big)\}.
	\end{equation*}
	More precisely, the GPSWFs are the eigenfunctions of the weighted finite Fourier transform operator
	$\mathcal F_c^{(\alpha)}$ defined  by
	\begin{equation}\label{Eq1.1}
	\mathcal F_c^{(\alpha)} f(x)=\int_{-1}^1 e^{icxy}  f(y)\,\omega_{\alpha}(y)\,\mathrm{d}y.
	\end{equation}
	It is well known, (see \cite{Karoui-Souabni1, Wang2}) that they are also eigenfunctions of the compact and positive operator
	$$
	\mathcal Q_c^{(\alpha)}=\frac{c}{2\pi}
	\mathcal F_c^{({\alpha})^*} \circ \mathcal F_c^{(\alpha)}
	$$
	which is  defined on $L^2(I,\omega_{\alpha})$ by
	\begin{equation}\label{EEq0}
	\mathcal Q_c^{(\alpha)} g (x) = \int_{-1}^1 \frac{c}{2 \pi}\mathcal K_{\alpha}(c(x-y)) g(y) \omega_{\alpha}(y) \d y
	\end{equation}
	Here,
	$$
	\mathcal K_{\alpha}(x)=\sqrt{\pi} 2^{\alpha+1/2}\Gamma(\alpha+1) \frac{J_{\alpha+1/2}(x)}{x^{\alpha+1/2}}
	$$
	
	
	It has been shown in \cite{Karoui-Souabni1, Wang2} that the last two integral operators commute with the following
	Sturm-Liouville operator $\mathcal L_c^{(\alpha)}$ defined on $C^2[-1,1]$ by
	\begin{equation}\label{diff_oper1}
	\mathcal L_c^{(\alpha)} (f)(x)= - \frac{1}{\omega_{\alpha}(x)} \frac{d}{dx}\left[ \omega_{\alpha}(x) (1-x^2) f'(x)\right] +c^2 x^2  f(x).
	\end{equation}
	Also, note that  the $(n+1)-$th eigenvalue $\chi_{n,\alpha}(c)$ of $\mathcal L_c^{(\alpha)}$ satisfies the following classical inequalities,
	\begin{equation}
	\label{boundschi}
	n(n+2\alpha+1) \leq \chi_{n,\alpha}(c) \leq n (n+2\alpha+1) +c^2,\quad \forall n\geq 0.
	\end{equation}
	
	\section{Statement of results}
	In this section, we will state the main results of this paper that we will prove in the following sections. As mentioned before, the main issue studied in this paper is to get a necessary and sufficient conditions of Bochner-Riesz expansion convergence of a function $f$ in the GPSWFs's and CPSWFs's basis. Let's start by studying the case of GPSWFs in the following theorem.
	\begin{theorem}
		Let $0\leq\alpha<3/2$, $\delta$ and $c$ be two positive number and $(\psi_{n,c}^{(\alpha)})_{n\geq 0}$ be the family of weighted prolate spheroidal wave functions.
		For a smooth function $f$ on $I=(-1,1)$, we define
		$$
		\Psi_R^{\delta}f=\sum_{n=0}^{\infty}\left(1-\frac{\chi_{n,\alpha}(c)}{R}\right)^{\delta}_+\scal{f,\psi_{n,c}^{(\alpha)}}_{L^2(I,\omega_\alpha)}\psi_{n,c}^{(\alpha)}.
		$$
		Then, for every $1\leq p<\infty$, $\Psi^{\delta}_R$ can be extended to a bounded operator $L^p(I,\omega_\alpha)\to L^p(I,\omega_\alpha)$.
		Further, $\Psi^{\delta}_Rf$ is uniformly bounded if ,and only if,  $\delta>\max\{\frac{\gamma_{\alpha}(p')}{2},0\}$ and $p\not=p_0=2-\frac{1}{\alpha+3/2}$ where $$\gamma_{\alpha}(p)=\begin{cases}0 &\mbox{ if } 1<p<p'_0\\ \epsilon &\mbox{ if } p=p'_0\\ 2(\alpha+1)\left[\frac{1}{2}-\frac{1}{p}\right]-\frac{1}{2}&\mbox{ if } p>p'_0\\\alpha+1&\mbox{ if } p=1 \end{cases}.$$ 
		and $\epsilon$ is an arbitrary real number.	
		Note that $p'$ denote here the dual exponent of $p$.
	\end{theorem}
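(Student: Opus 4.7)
The basic idea is to reduce the problem to the corresponding Bochner-Riesz problem for the Gegenbauer polynomials $\widetilde P_n^{(\alpha)}$, which are exactly the eigenfunctions of $\mathcal L_0^{(\alpha)}$, i.e.\ the $c=0$ limit of the operator $\mathcal L_c^{(\alpha)}$ in (\ref{diff_oper1}). The exponent $\gamma_\alpha(p)=2(\alpha+1)\big|\tfrac{1}{2}-\tfrac{1}{p}\big|-\tfrac{1}{2}$ appearing in the statement is precisely the classical critical Bochner-Riesz index for Jacobi series in the weighted $L^p$-setting (Chanillo-Muckenhoupt and related works), and the exceptional exponent $p_0=2-\tfrac{1}{\alpha+3/2}$ is the corresponding endpoint of the Bochner-Riesz diagram. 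Since $\mathcal L_c^{(\alpha)}$ differs from $\mathcal L_0^{(\alpha)}$ only by the bounded multiplication operator $c^2 x^2$, and since by (\ref{boundschi}) the eigenvalues satisfy $|\chi_{n,\alpha}(c)-n(n+2\alpha+1)|\leq c^2$, a perturbative analysis should transport the known Gegenbauer result to the GPSWFs and produce exactly the threshold asserted.

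\textbf{Fixed $R$ and the sufficient direction.} That $\Psi_R^\delta$ is bounded on $L^p(I,\omega_\alpha)$ for each fixed $R$ is essentially immediate: because $\chi_{n,\alpha}(c)\to+\infty$, only finitely many indices $n$ contribute, each $\psi_{n,c}^{(\alpha)}$ is continuous on $[-1,1]$ hence in every $L^q(I,\omega_\alpha)$, and the coefficients $\langle f,\psi_{n,c}^{(\alpha)}\rangle_{L^2(I,\omega_\alpha)}$ are controlled by $\|f\|_{L^p(I,\omega_\alpha)}$ through H\"older. For the uniform-in-$R$ direction, I would expand each GPSWF in the Gegenbauer basis,
$$
\psi_{n,c}^{(\alpha)}=\sum_{k\geq 0} d_n^k(c)\,\widetilde P_k^{(\alpha)},
$$
and split the Bochner-Riesz kernel
$$
K_R^\delta(x,y)=\sum_{n\geq 0}\left(1-\tfrac{\chi_{n,\alpha}(c)}{R}\right)_+^\delta \psi_{n,c}^{(\alpha)}(x)\,\psi_{n,c}^{(\alpha)}(y)
$$
into a main piece matching the Gegenbauer Bochner-Riesz kernel plus a remainder. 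The closeness of the eigenvalues established in (\ref{boundschi}), combined with the off-diagonal decay of the matrix $(d_n^k(c))$ that follows from the commuting differential structure, should allow one to control the remainder by a uniformly bounded operator on $L^p(I,\omega_\alpha)$, pushing the admissible range of $\delta$ back to the Gegenbauer one $\delta>\gamma_\alpha(p')/2$.

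\textbf{Necessary direction and main obstacle.} To prove the sharpness of the threshold, I would test $\Psi_R^\delta$ against well-chosen functions, either individual Gegenbauer polynomials (so that, by duality, the $L^{p'}$-norms $\|\psi_{n,c}^{(\alpha)}\|_{L^{p'}(I,\omega_\alpha)}$ play the role of lower bounds) or functions concentrated near the endpoint $x=\pm1$ where the weight $\omega_\alpha$ degenerates. Via the transplantation step above, these lower bounds are already saturated on the Gegenbauer side, which forces $\gamma_\alpha(p')/2$ to be optimal and rules out $p=p_0$. The principal technical difficulty is to establish strong enough uniform estimates on the matrix coefficients $d_n^k(c)$ to make the perturbative comparison with Gegenbauer rigorous in the regime $\chi_{n,\alpha}(c)\sim R$, where the Gegenbauer kernel itself is only weakly bounded. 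A secondary but delicate point is the endpoint $p=p_0$, which will presumably require a Stein-type complex interpolation or a careful boundary analysis to produce the arbitrary $\varepsilon$-loss encoded in the definition of $\gamma_\alpha$.
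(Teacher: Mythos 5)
Your fixed-$R$ boundedness argument is fine and agrees in substance with the paper's (which bounds $\norm{K_R^\delta}_{L^p\otimes L^{p'}}$ using the norm growth of the eigenfunctions and the eigenvalue counting). But your main strategy --- expanding $\psi_{n,c}^{(\alpha)}$ in the Gegenbauer basis and transferring the known Gegenbauer result perturbatively --- is not carried out at the two points where the theorem actually lives, and at least one of them is a genuine obstruction rather than a technicality. For the sufficient direction, even granting strong off-diagonal decay of the matrix $(d_n^k(c))$, the two multiplier sequences $(1-\chi_{n,\alpha}(c)/R)_+^\delta$ and $(1-n(n+2\alpha+1)/R)_+^\delta$ differ near the spectral cutoff $\chi\sim R$ by terms of order $c^2\,\delta\,R^{-1}(1-\chi/R)_+^{\delta-1}$, which is not small when $\delta\le 1$ --- precisely the range of interest, since the critical $\delta$ is below $1$ for $p$ near $2$. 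So the ``remainder'' in your kernel splitting is not visibly a uniformly bounded perturbation, and you give no mechanism to control it. The paper avoids this entirely: it never compares with Gegenbauer for sufficiency, but instead verifies two abstract hypotheses (polynomial growth of $\norm{\psi_{n,c}^{(\alpha)}}_{L^p}$ in $n$, and an eigenvalue counting condition $\#\{n:\chi_n\in(m,M)\}\le C(M-m)$ together with $\chi_n\gtrsim n^\varepsilon$), then runs a dyadic decomposition $\phi_{R,k}^\delta$ of the multiplier, an $(L^p,L^2)$ reduction lemma, and heat-kernel plus finite-propagation-speed estimates for $\cos(t\sqrt{\mathcal L})$ in the style of Casarino--Peloso. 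That argument is insensitive to the exact eigenvalues and needs no transplantation.

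For the necessary direction, testing against individual Gegenbauer polynomials or bumps at $x=\pm1$ is only a heuristic: it does not produce the exact exceptional exponent $p_0=2-\frac{1}{\alpha+3/2}$, the $\epsilon$-loss at $p=p_0'$, or the precise value $2(\alpha+1)\left[\frac12-\frac1p\right]-\frac12$. The paper's mechanism is a transferring theorem: setting $g_\lambda(\theta)=g(\lambda\theta)$ and letting $\lambda\to\infty$, the uniform boundedness of the GPSWF multiplier operators forces $\phi$ to be an $L^p$ Hankel-transform multiplier, using Fatou, a weak-compactness/diagonal argument, and the uniform Bessel-type asymptotics of $\psi_{n,c}^{(\alpha)}(\cos(\tau/\lambda))$ from Karoui--Souabni; the sharp Hankel threshold (including $p_0$) is then imported from Ciaurri--Varona. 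If you want to salvage your route, you would essentially have to prove a transplantation theorem between the GPSWF and Gegenbauer systems that is bounded on $L^p(I,\omega_\alpha)$ uniformly, plus a separate argument handling the eigenvalue shift at the cutoff; both are missing and neither is routine.
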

\begin{remark}
	The sufficient condition in both GPSWFs's and CPSWFs's case still valid even for all $\alpha>-1/2$.
\end{remark}
\begin{remark}[Two special cases]
	
	Recall that $\psi^{(\alpha)}_{n,0} = \wJ^{(\alpha,\alpha)}_n$, then we recover the same result for the case of normalized Geganbauer polynomials. Note that both conditions (A) and (B), defined in the proof of the last theorem, are still valid even for $\wJ_n^{(\alpha,\beta)}$ with exactly the same proof and by noticing that the transferring theorem, which is the key step of the necessary condition, has been proven in \cite{S.I} , the last result is valid also for Jacobi polynomials for all $\alpha, \beta >-1/2$ . For $\alpha=0$ and $c>0$, $\psi^{0}_{n,c} =\psi_{n,c}$ presents the classical prolate spheroidal wave functions PSWFs which satisfy the Bochner-Riez mean convergence if and only if $\delta> \max\{0,\frac{\gamma_0(p')}{2}\}.$
\end{remark}
Let us now focus on the circular case.
\begin{theorem}
	Let $\alpha\geq 1/2$, $c>0$ and $(\varphi_{n,c}^{(\alpha)})_{n\geq 0}$ be the family of Hankel prolate spheroidal wave functions.
	For a smooth function $f$ on $I=(0,1)$, we define
	$$
	\Psi_R^{\delta}f=\sum_{n=0}^{\infty}\left(1-\frac{\chi_{n,\alpha}(c)}{R}\right)^{\delta}_+\scal{f,\varphi_{n,c}^{(\alpha)}}_{L^2(0,1)}\varphi_{n,c}^{(\alpha)}.
	$$
	Then, for every $1\leq p<\infty$, $\Psi^{\delta}_R$ can be extended to a bounded operator $L^p(0,1)\to L^p(0,1)$.
	Further
	$
	\Psi^{\delta}_Rf
	$
	is uniformly bounded if ,and only if, $\delta>\max\{\frac{\gamma(p')}{2},0\}$ where 
	$$\gamma(p)=\begin{cases}\frac{1}{p}-\frac{1}{2} &\mbox{ if } 1<p<4\\ \epsilon-\frac{1}{4} &\mbox{ if } p=4\\ \frac{1}{3}\left[\frac{1}{p}-1\right]&\mbox{ if } p>4\\ 1&\mbox{ if } p=1 \end{cases}.$$.
\end{theorem}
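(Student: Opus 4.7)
The plan parallels the proof of Theorem~1 but replaces the Gegenbauer comparison family by the spherical Bessel basis $\{j_n^{(\alpha)}\}$, which is the orthonormal eigenbasis of $\mathcal{L}_0^{\alpha}$ on $L^2(0,1)$ obtained from \eqref{differ_operator1} at $c=0$. By \eqref{boundschi2} the CPSWF eigenvalues differ from $\chi_{n,\alpha}(0)=(2n+\alpha+1/2)(2n+\alpha+3/2)$ by at most $c^2$, so the Bochner--Riesz symbols for the two bases are very close for $R$ large, and the only substantive change between the operators lies in the replacement of the eigenfunctions.

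\textbf{Sufficient direction.} I would write $\Psi_R^{\delta}=T_R^{\delta}+(\Psi_R^{\delta}-T_R^{\delta})$, where $T_R^{\delta}$ denotes the Fourier--Bessel Bochner--Riesz mean. Uniform $L^p(0,1)$ boundedness of $T_R^{\delta}$ under $\delta>\max\{\gamma(p')/2,0\}$ is exactly the classical Fourier--Bessel result of \cite{CIAl,CIA2}; this is what the piecewise formula for $\gamma(p)$ encodes. To control the difference, expand $\varphi_{n,c}^{(\alpha)}=\sum_k a_{n,k}(c)\,j_k^{(\alpha)}$, note that $\mathcal{L}_c^{\alpha}-\mathcal{L}_0^{\alpha}$ is the bounded multiplication $c^2 x^2$, and use the spectral gap $\chi_{n+1,\alpha}(0)-\chi_{n,\alpha}(0)\gtrsim n$ to obtain near-diagonality of the matrix $(a_{n,k}(c))$ with off-diagonal decay of order $c^2/|n^2-k^2|$. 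Combined with the scalar estimate $\bigl|(1-\chi_{n,\alpha}(c)/R)^{\delta}_+-(1-\chi_{n,\alpha}(0)/R)^{\delta}_+\bigr|\lesssim c^2/R$, these bounds should make $\Psi_R^{\delta}-T_R^{\delta}$ have a Schur-type kernel that is uniformly $L^p$-bounded in $R$, completing the sufficiency.

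\textbf{Necessary direction.} The same near-diagonal matrix $(a_{n,k}(c))$ acts as a bounded change of basis on $L^p(0,1)$, so a transferring argument in the spirit of \cite{S.I} transports uniform $L^p$ boundedness of $\Psi_R^{\delta}$ to uniform boundedness of the Fourier--Bessel operator $T_R^{\delta}$. The sharp necessary condition for the latter then forces $\delta>\max\{\gamma(p')/2,0\}$ with the stated piecewise exponent; the endpoint value $\epsilon-1/4$ at $p=4$ reflects the usual logarithmic failure at the critical line for disk-type multipliers in one dimension.

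\textbf{Main obstacle.} The genuinely delicate step is turning the entrywise smallness $a_{n,k}(c)-\delta_{n,k}=\mathcal O(c^2/|n^2-k^2|)$ into a bona fide operator-norm bound on $L^p(0,1)$ that survives summation against the Riesz weights $(1-\chi_{n,\alpha}(c)/R)^{\delta}_+$ uniformly in the cutoff $R\to\infty$. This needs the eigenvalue localization \eqref{boundschi2} together with sharp uniform asymptotics of $j_n^{(\alpha)}$ on $(0,1)$, and it is precisely near the transitional ranges $p\to 4$ and $p\to 1$ that the Schur-type estimates for the perturbation have to be tight enough not to lose the critical $\delta$.
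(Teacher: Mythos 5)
Your proposal rests on a misidentification that undermines both directions. The Fourier--Bessel system $\{\sqrt{x}J_\alpha(s_nx)\}$ is \emph{not} the eigenbasis of $\mathcal{L}_0^{\alpha}$: setting $c=0$ in \eqref{differ_operator1} and solving $-\frac{d}{dx}\left[(1-x^2)u'\right]-\frac{1/4-\alpha^2}{x^2}u=\chi u$ with $u$ bounded on $(0,1)$ yields the Jacobi-type functions $x^{\alpha+1/2}P_n^{(0,\alpha)}(1-2x^2)$ (suitably normalized) with eigenvalues $(2n+\alpha+1/2)(2n+\alpha+3/2)$, consistent with \eqref{boundschi2}; the Fourier--Bessel functions solve a different boundary-value problem whose eigenvalues are the squared zeros $s_n^2$ of $J_\alpha$. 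So your perturbation series compares $\varphi_{n,c}^{(\alpha)}$ with the wrong family, and the results of \cite{CIAl,CIA2} you want to import concern a basis that is not a small perturbation of the CPSWFs. The transfer target for your necessary condition is off for the same reason: the paper's transferring theorem sends the CPSWF multiplier to the \emph{continuous} Hankel transform of order $0$ on $(0,\infty)$, via the scaling $g_\lambda(\theta)=g(\lambda\theta)$, $\lambda\to\infty$, Fatou's lemma and weak compactness, using the uniform asymptotics of $\varphi_{n,c}^{(\alpha)}(\cos(\theta/\lambda))$ in terms of $J_0$ and $P_n^{(0,\alpha)}$ from \cite{Karoui-Mehrzi}, and then invokes the sharp Hankel result of \cite{C.V}. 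That the limit is order $0$ independently of $\alpha$ is precisely why $\gamma(p)$ in the statement has its thresholds at $p=4$ and does not depend on $\alpha$; a transfer to the order-$\alpha$ Fourier--Bessel or Hankel setting would produce $\alpha$-dependent thresholds and hence the wrong exponent.

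Beyond this, the step you yourself flag as the ``main obstacle'' is a genuine gap, and it is not how the paper argues: the paper never compares $\Psi_R^{\delta}$ with a $c=0$ operator. For sufficiency it proves an abstract theorem (Theorem 3): if $\norm{\varphi_n}_{L^p}\leq Cn^{\gamma(p)}$ (condition (A)) and the eigenvalues satisfy the counting and growth bounds (condition (B)), then $\delta>\max\{\gamma(p')/\varepsilon,0\}$ suffices. Its proof decomposes the multiplier dyadically in $1-\chi_n/R$, uses a spectral-cluster $(L^p,L^2)$ lemma near the concentration interval, and controls the far region by writing $\phi_{R,k}^{\delta}(\mathcal{L})=g_{R,k}^{\delta}(\sqrt{\mathcal{L}})e^{-\mathcal{L}/R}$ and exploiting Gaussian heat-kernel bounds together with the finite propagation speed of $\cos(t\sqrt{\mathcal{L}})$, following \cite{Casarino-M.Peloso}; for the CPSWFs conditions (A) and (B) are supplied by \cite{Karoui-Boulsane} and \eqref{boundschi2}. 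No operator-norm bound on a change-of-basis matrix is ever needed. To salvage your route you would have to prove Bochner--Riesz bounds for the $c=0$ Jacobi-type basis from scratch \emph{and} show that the difference operator is uniformly bounded on $L^p$ in $R$ from entrywise coefficient decay --- neither of which is easier than the paper's direct argument.
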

	
	\section{Proof of sufficient condition}
		Let $(I,\omega)$ be a measured space such that $\omega$ is a bounded weight function. We denote by $p' = \frac{p}{p-1}$ the dual index of $p$.
	
	Throughout this section, $\mathcal{L}$ denotes a Sturm-Liouville operator and $\varphi_n$ (respectively $\lambda_n$) the sequence of the associated eigenfunctions (respectively eigenvalues). The Riesz means of index $\delta >0 $ associated with $\mathcal{L}$ of a function $f \in \mathcal{C}^\infty(I,\R) $ are consequently defined as 
	\begin{equation}
	\Psi^{\delta}_Rf = \sum_{n=0}^{\infty}\Big(1-\frac{\lambda_n}{R}\Big)^{\delta}_+ a_n(f) \varphi_n \quad \mbox{with} \quad a_n(f) = \int_I f(y) \varphi_n(y) d\mu(y). 
	\end{equation}
	$\Psi^{\delta}_Rf$ can also be written as 
	$$ \Psi^{\delta}_R.f(x) = \int_{I} K_R^\delta(x,y)f(y) d\mu(y) \quad \mbox{where} \quad 
	K_R^\delta(x,y) = \sum_{n=0}^{\infty} \Big(1-\frac{\lambda_n}{R}\Big)^{\delta}_+ \varphi_n(x) \varphi_n(y) $$
	
	Our aim in this section is to prove at the same time the sufficient conditions of our two main theorems. 
	More precisely, we will define several conditions on $\varphi_n$ that will ensure the convergence of $\Psi_R^{\delta}.f$ to $f$ in the $L^p$norm as $R \to \infty $ and verify that both two families satisfy these conditions. Assume that $\varphi_n$ satisfies the following conditions :
	
	\begin{itemize}
		\item[$(A)$] For every $1\leq p\leq\infty$, every $n$, $\varphi_n\in L^p(I,\omega)$.
		Further, we assume that there is a constant $\gamma(p)\geq 0$ such that $\norm{\varphi_n}_{L^{p}(\mu)}\leq Cn^{\gamma(p)}$.
		\item[$(B)$] The sequence $(\lambda_n)$ of the eigenvalues of the operator $\mathcal{L}$ satisfies the following properties
		\begin{enumerate}
			\item $\displaystyle\sum_{\lambda_n\in (m,M)} 1\leq C(M-m)$
			for all $0\leq m<M$.\\
			\item There exists $\varepsilon>0$ such that 
			$$\lambda_n\geq Cn^{\varepsilon}.$$
		\end{enumerate}
	\end{itemize}
	
	First of all, we start by giving sense to $\Psi^{\delta}_R.f$ for every $f \in L^p(\mu) $. Indeed, 
	$$ \norm{K^{\delta}_R}_{L^p(\mu)\otimes L^{p'}(\mu)} \leq \sum_{\lambda_n<R} \norm{\varphi_n}_p \norm{\varphi_n}_{p'} \leq \sum_{\lambda_n<R}n^{\gamma(p)+\gamma(p')} \leq CR^{\frac{\left(\gamma(p)+\gamma(p')\right)}{\varepsilon}+1} , $$
	So that the integral operator $\Psi_R^\delta $ can be extended to a continuous operator $L^p(\mu) \to L^p(\mu) $ with $$ \norm{\Psi_R^{\delta}}_{L^p\to L^p} \leq \norm{K^{\delta}_R}_{L^p(\mu)\otimes L^{p'}(\mu)} .$$
	The following theorem is one of the main results of this paper.
	\begin{theorem}
		With the above notation and under conditions $(A)$ and $(B)$ with $\delta>\delta(p)=\max\{\frac{\gamma(p')}{\varepsilon},0\}$, there exists a constant $C>0$ satisfying the following inequality
		\begin{equation} \label{lpbound}
		\norm{\Psi_R^{\delta}}_{(L^p(I,w),L^p(I,w))}\leq C.
		\end{equation}
	\end{theorem}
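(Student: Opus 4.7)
The plan is to prove uniform $L^p$-boundedness of $\Psi_R^\delta$ by a dyadic decomposition of the kernel in the spectral variable, bounding each dyadic piece by combining the elementary $L^2$ bound with a rougher kernel estimate of the type appearing in the preamble, and then using Riesz--Thorin interpolation so that the two bounds conspire to eliminate the growth in $R$ exactly at the threshold $\delta>\gamma(p')/\varepsilon$.

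First I would partition $\{n:\lambda_n<R\}$ into a core $E_0=\{n:\lambda_n\leq R/2\}$ and dyadic shells $E_k=\{n:R(1-2^{-k+1})\leq\lambda_n<R(1-2^{-k})\}$ for $k\geq 1$, and correspondingly write $\Psi_R^\delta=\sum_{k\geq 0}T_k$, where $T_k$ is the integral operator whose kernel is the restriction of $K_R^\delta$ to indices in $E_k$. On each shell the spectral factor $(1-\lambda_n/R)_+^\delta$ is of order $2^{-k\delta}$; by (B1) the cardinality satisfies $|E_k|\leq CR\cdot 2^{-k+1}$; and by (B2) every $n\in E_k$ obeys $n\leq CR^{1/\varepsilon}$, so condition (A) yields the uniform bound $\|\varphi_n\|_q\leq CR^{\gamma(q)/\varepsilon}$ on $E_k$ for every $q\in[1,\infty]$.

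Next, for each $T_k$ I would establish two competing bounds: the sharp $L^2\to L^2$ norm $\|T_k\|_{2\to 2}\leq 2^{-k\delta}$, immediate from the spectral representation and the $L^2$-orthonormality of $\{\varphi_n\}$, and a rougher $L^p\to L^p$ bound $\|T_k\|_{p\to p}\leq C\,2^{-k(\delta+1)}R^{1+(\gamma(p)+\gamma(p'))/\varepsilon}$, obtained by applying Minkowski's and H\"older's inequalities to the restricted kernel together with the per-shell estimates above. Riesz--Thorin interpolation between these two endpoints (combined with the self-adjointness of $\Psi_R^\delta$, which symmetrizes the roles of $p$ and $p'$) produces an intermediate bound whose $R$-dependence takes the form $R^{\gamma(p')/\varepsilon-\delta}$: this is uniformly bounded in $R$ precisely when $\delta>\gamma(p')/\varepsilon$, while the residual dyadic factor $2^{-k\delta}$ guarantees convergence of $\sum_k \|T_k\|_{p\to p}$.

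The main obstacle is the interpolation step. One must carefully balance the exponents so that the $R^{(\gamma(p)+\gamma(p'))/\varepsilon}$ growth of the rough kernel bound is reduced---by absorbing one factor of $R^{\gamma(p)/\varepsilon}$ into the $L^2$ side---leaving only an $R^{\gamma(p')/\varepsilon}$ factor which the spectral decay $2^{-k\delta}$ can defeat under the assumption $\delta>\gamma(p')/\varepsilon$. The asymmetry between $p$ and $p'$ in the final threshold is exactly the reflection of this asymmetric absorption. Once the geometric series in $k$ is summed, the uniform estimate $\|\Psi_R^\delta\|_{L^p\to L^p}\leq C$ follows by the triangle inequality, completing the proof.
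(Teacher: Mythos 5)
Your overall architecture (dyadic decomposition near the spectral edge $\lambda_n\approx R$, an $L^2$ bound of size $2^{-k\delta}$ per shell, a crude kernel bound via (A) and (B)) starts the same way as the paper, but the step you flag as "the main obstacle" --- the interpolation --- is a genuine gap, and in fact the argument cannot close as described. Interpolating $\norm{T_k}_{2\to 2}\leq C2^{-k\delta}$ against $\norm{T_k}_{p_1\to p_1}\leq C2^{-k(\delta+1)}R^{1+(\gamma(p_1)+\gamma(p_1'))/\varepsilon}$ gives, for the intermediate exponent $p$ with $\frac1p=\frac{1-\theta}{p_1}+\frac{\theta}{2}$, a bound of the form $2^{-k(\delta+1-\theta)}R^{(1-\theta)(1+(\gamma(p_1)+\gamma(p_1'))/\varepsilon)}$. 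The power of $R$ here is \emph{positive} and independent of $\delta$; the $L^2$ endpoint contributes no negative power of $R$ that could be "absorbed", so there is no mechanism by which $R^{\gamma(p')/\varepsilon-\delta}$ can emerge. The problem is most visible on the bulk shell $E_0=\{\lambda_n\leq R/2\}$, where the spectral weight is of order $1$ and $T_0$ is essentially the full spectral projector onto $[0,R/2]$: your two available bounds for it are $\norm{T_0}_{2\to2}\leq 1$ and a kernel bound growing like $R^{1+(\gamma(p)+\gamma(p'))/\varepsilon}$, and no convex combination of these is uniform in $R$. The factor $2^{-k\delta}$ only helps for $k$ comparable to $\log_2 R$, which is not where the growth lives.

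What is missing is the spatial localization that the paper (following Casarino--Peloso) uses to kill the $R$-growth. There, each dyadic piece $\Psi_{R,k}^{\delta}$ is estimated separately on a shrinking interval $I_{k,1}$ of length $\sim(2^k/R)^{\mu(p)}$ around the center of $I$, where H\"older's inequality converts the $(L^p,L^2)$ bound $C2^{-k(\delta+\frac12)}R^{\frac{\gamma(p')}{\varepsilon}+\frac12}\norm{f}_p$ into $C2^{-k(\delta-\frac{\gamma(p')}{\varepsilon})}\norm{f}_p$ --- the negative power of $R$ coming from the measure of $I_{k,1}$ exactly cancels the positive one --- and on the complement $I_{k,2}$, where a Schur test is run using Gaussian upper bounds for the heat kernel $e^{-t\mathcal L}$ and the finite propagation speed of $\cos(t\sqrt{\mathcal L})$ to show the kernel is genuinely small far from the diagonal. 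This off-diagonal decay is analytic input about $\mathcal L$ that conditions (A) and (B) alone, fed through Riesz--Thorin, cannot reproduce. To repair your proof you would need to replace the interpolation step by such a spatial splitting (or some other source of off-diagonal kernel decay); as written, the claimed bound $R^{\gamma(p')/\varepsilon-\delta}$ is unsupported and the conclusion does not follow.
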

	The following lemma will be used in the proof of the previous theorem.  
	\begin{lemma}
		Let $1 \leq p\leq 2$ then for every $f\in L^p(I,\omega),$ we have
		\begin{equation}
		\norm{\sum_{\lambda_n\in (m,M)}a_n(f)\varphi_n}_{L^2(I,\omega)}\leq C(p) M^{\frac{\gamma(p')}{\varepsilon}}(M-m)^{\frac{1}{2}}\norm{f}_{L^p(I,\omega)}.
		\end{equation}
	\end{lemma}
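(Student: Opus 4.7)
The operator $S_{m,M}f := \sum_{\lambda_n\in(m,M)} a_n(f)\varphi_n$ is the spectral projection onto the eigenspaces with eigenvalues in $(m,M)$, so it is self-adjoint on $L^2(I,\omega)$. The plan is to exploit this self-adjointness via duality: since $L^p(I,\omega)$ and $L^{p'}(I,\omega)$ are in duality under the pairing $\langle\cdot,\cdot\rangle_\omega$, we have
$$
\norm{S_{m,M}}_{L^p(I,\omega)\to L^2(I,\omega)}=\norm{S_{m,M}}_{L^2(I,\omega)\to L^{p'}(I,\omega)}.
$$
So it suffices to estimate the right-hand norm, where crucially $p'\geq 2$, and the hypothesis (A) gives useful $L^{p'}$ bounds on the eigenfunctions.

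For $g\in L^2(I,\omega)$, writing $S_{m,M}g=\sum_{\lambda_n\in(m,M)}\langle g,\varphi_n\rangle_\omega\,\varphi_n$, the triangle inequality and then Cauchy--Schwarz in the index $n$ yield
$$
\norm{S_{m,M}g}_{L^{p'}(I,\omega)}\leq \sum_{\lambda_n\in(m,M)}|\langle g,\varphi_n\rangle_\omega|\,\norm{\varphi_n}_{L^{p'}(I,\omega)}\leq\Bigl(\sum_{\lambda_n\in(m,M)}|\langle g,\varphi_n\rangle_\omega|^2\Bigr)^{1/2}\Bigl(\sum_{\lambda_n\in(m,M)}\norm{\varphi_n}^2_{L^{p'}(I,\omega)}\Bigr)^{1/2}.
$$
By Bessel's inequality the first factor is $\leq\norm{g}_{L^2(I,\omega)}$. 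This reduces the lemma to bounding the second factor by $C\,M^{\gamma(p')/\varepsilon}(M-m)^{1/2}$.

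For the second factor I combine the two parts of condition (B) with condition (A). By (B.2), every index $n$ appearing in the sum satisfies $\lambda_n<M$, hence $n\leq C\,M^{1/\varepsilon}$. Consequently, by (A),
$$
\norm{\varphi_n}_{L^{p'}(I,\omega)}\leq C\,n^{\gamma(p')}\leq C'\,M^{\gamma(p')/\varepsilon}
$$
uniformly in $n$ with $\lambda_n\in(m,M)$. By (B.1) the number of such indices is at most $C(M-m)$, so
$$
\sum_{\lambda_n\in(m,M)}\norm{\varphi_n}^2_{L^{p'}(I,\omega)}\leq C\,(M-m)\,M^{2\gamma(p')/\varepsilon},
$$
and taking square roots completes the estimate of $\norm{S_{m,M}}_{L^2\to L^{p'}}$, whence by duality the claimed bound.

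The step that needs the most care is combining the two assumptions cleanly: the eigenvalue counting bound in (B.1) only gives the \emph{cardinality} of indices in $(m,M)$, and by itself would not suffice since each $\|\varphi_n\|_{L^{p'}}$ grows with $n$; the lower growth rate $\lambda_n\gtrsim n^\varepsilon$ from (B.2) is needed precisely to convert the cap $\lambda_n<M$ into the uniform pointwise control $n\leq C M^{1/\varepsilon}$ on the surviving indices. Beyond this the argument is just duality plus Bessel plus Cauchy--Schwarz, with no further analytic obstacle.
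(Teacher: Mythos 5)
Your proof is correct, and its substance coincides with the paper's: both arguments come down to the same two observations, namely that conditions (A) and (B.2) together give the uniform bound $\norm{\varphi_n}_{L^{p'}(I,\omega)}\leq Cn^{\gamma(p')}\leq C M^{\gamma(p')/\varepsilon}$ for all indices with $\lambda_n<M$, and that (B.1) bounds the number of such indices by $C(M-m)$. The only real difference is organizational. The paper works directly on the $L^p\to L^2$ bound: it expands $\norm{\sum a_n(f)\varphi_n}_{L^2(I,\omega)}^2=\sum a_n(f)^2$ by orthogonality and estimates each coefficient by H\"older, $|a_n(f)|\leq\norm{\varphi_n}_{L^{p'}(I,\omega)}\norm{f}_{L^p(I,\omega)}$. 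You instead dualize to an $L^2\to L^{p'}$ bound for the (self-adjoint) spectral projector and then use the triangle inequality, Cauchy--Schwarz and Bessel. These are mirror images of one another --- your $\norm{\varphi_n}_{L^{p'}}$ factor in the dual estimate is exactly the paper's H\"older constant --- so neither buys anything over the other here; the paper's version is marginally shorter since it avoids having to justify the duality identity $\norm{S_{m,M}}_{L^p\to L^2}=\norm{S_{m,M}}_{L^2\to L^{p'}}$. Your closing remark correctly identifies the one point where care is needed: (B.1) alone does not suffice, and (B.2) is what converts the spectral cap $\lambda_n<M$ into the index cap $n\lesssim M^{1/\varepsilon}$; this is precisely how the paper uses it as well.
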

	\begin{proof}
		By orthogonality
		and Hölder's inequality, we have
		\begin{eqnarray*}
			\norm{\sum_{\lambda_n\in (m,M)}a_n(f)\varphi_n}^2_{L^2(I,\omega)}&=&\sum_{\lambda_n\in (m,M)}a_n^2(f) \leq \sum_{\lambda_n\in (m,M)}\norm{\varphi_n}^2_{L^{p'}(I,\omega)}\norm{f}^2_{L^p(I,\omega)}	
		\end{eqnarray*}
		From condition $(A)$, we have
		$\norm{\varphi_n}_{L^{p'}(I,\omega)}\leq n^{\gamma(p')}$. We also obtain by using condition ($B1$)
		
		\begin{eqnarray*}
			\norm{\sum_{\lambda_n\in (m,M)}a_n(f)\varphi_n}^2_{L^2(I,\omega)}&\leq&\sum_{\lambda_n\in (m,M)} n^{2\gamma(p')}\norm{f}^2_{L^p(I,\omega)} \leq C\sum_{\lambda_n\in (m,M)} \lambda_n^{\frac{2\gamma(p')}{\varepsilon}}\norm{f}^2_{L^p(I,\omega)}\\&\leq&CM^{\frac{2\gamma(p')}{\varepsilon}}\left(\sum_{\lambda_n\in (m,M)}1\right)\norm{f}^2_{L^p(I,\omega)}\\&\leq&CM^{(\frac{2\gamma(p')}{\varepsilon})}(M-m) \norm{f}^2_{L^p(I,\omega)}.
		\end{eqnarray*}
		Then one gets
		$$	\norm{\sum_{\lambda_n\in (m,M)}a_n(f)\varphi_n}_{L^2(I,\omega)}\leq C M^{(\frac{\gamma(p')}{\varepsilon})}(M-m)^{\frac{1}{2}} \norm{f}_{L^p(I,\omega)}.$$
		
	\end{proof}
	\begin{proof}[Proof of Theorem1]

		We should mention here that some parts of the proof of this theorem are inspired from  \cite{Casarino-M.Peloso}.
		Without loss of generality, we can consider $1\leq p<2$ and conclude by duality.
		To prove \eqref{lpbound}, we are going to have to decompose the multiplier $\Psi_{R}^\delta$. In order to do so, let $\phi \in\mathcal{C}^{\infty}_0(\mathbb{R})$ with support on $(1/2,2)$ such that $\displaystyle\sum_{k\in\mathbb{Z}}\phi(2^kt)=1$ and $\displaystyle \phi_0(t)=1-\sum_{k=1}^{+\infty}\phi(2^kt)$ for all $t>0$. We define $$\phi_{R,k}^{\delta}(t)=\left(1-\frac{t}{R}\right)_+^{\delta}\phi\left(2^k(1-\frac{t}{R})\right).$$
		We recall that, from \cite{Casarino-M.Peloso}, this last function has the following properties :
		\begin{enumerate}
			\item $\mbox{supp}\left(\phi_{R,k}^{\delta}\right)\subseteq (R(1-2^{-k+1}),R(1-2^{-k-1}))$, \\
			\item $\sup_{t\in\mathbb{R}}|\phi_{R,k}^{\delta}(t)|\leq C2^{-k\delta}$,\\
			\item $\forall N\geq 0,$ there exists $C_N>0$ such that
			$$|\partial_t^N\phi_{R,k}^{\delta}(t)|\leq C_N\Big(\frac{2^k}{R}\Big)^N. $$ 
		\end{enumerate}
		Furthermore, we denote by
		\begin{equation}
		\Psi_{R,k}^{\delta}.f=\sum_{n=0}^{\infty}\phi_{R,k}^{\delta}(\lambda_n)a_n(f)\varphi_n \qquad k=1,2,\cdots
		\end{equation}
		Then, we have
		\begin{eqnarray*}
			\Psi_R^{\delta}f&=&\sum_{n=0}^{\infty}\left(1-\frac{\lambda_n}{R}\right)_+^{\delta}a_n(f)\varphi_n\\&=&\sum_{n=0}^{\infty}\phi_0(1-\frac{\lambda_n}{R})\left(1-\frac{\lambda_n}{R}\right)_+^{\delta}a_n(f)\varphi_n+\sum_{n=0}^{\infty}\left(\sum_{k=1}^{+\infty}\phi(2^k(1-\frac{\lambda_n}{R}))\right)\left(1-\frac{\lambda_n}{R}\right)_+^{\delta}a_n(f)\varphi_n\\&=&\sum_{n=0}^{\infty}\phi_0(1-\frac{\lambda_n}{R})\left(1-\frac{\lambda_n}{R}\right)_+^{\delta}a_n(f)\varphi_n+\sum_{k=1}^{\left[\frac{\log(R)}{\log(2)}\right]}\sum_{n=0}^{\infty}\phi_{R,k}^{\delta}(\lambda_n)a_n(f)\varphi_n+\sum_{k=\left[\frac{\log(R)}{\log(2)}\right]+1}^{\infty}\sum_{n=0}^{\infty}\phi_{R,k}^{\delta}(\lambda_n)a_n(f)\varphi_n\\&=&\psi_{R,0}^{\delta}f+\sum_{k=1}^{\left[\frac{\log(R)}{\log(2)}\right]}\Psi_{R,k}^{\delta}f+\mathcal{R}_{R}^{\delta}f.
		\end{eqnarray*}
		
		It is clear that the main term is the second one. With the same approach used in \cite{Casarino-M.Peloso}, we will prove the following proposition :
		
		\begin{proposition}
			Let $1\leq p< 2$ and $\delta>\delta(p)=\frac{\gamma(p')}{\varepsilon}$. There exists $\beta>0$ such that for every $f\in L^p(I,w)$, we have
			\begin{equation}\label{lpineq}
			\norm{\Psi_{R,k}^{\delta}f}_{L^p(I,w)}\leq C2^{-k\beta}\norm{f}_{L^p(I,w)},
			\end{equation}
			where $C$ is a constant independent of $R$ and $f$.
		\end{proposition}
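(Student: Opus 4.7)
The plan is to obtain $\|\Psi_{R,k}^{\delta}f\|_{L^p(I,\omega)}\leq C\,2^{-k\beta}\|f\|_{L^p(I,\omega)}$ for some $\beta>0$, with $C$ independent of $R$ and $f$, by combining the spectral cluster estimate of the preceding lemma with an interpolation argument. First, since $\phi_{R,k}^{\delta}$ is real-valued and $\mathcal{L}$ is self-adjoint, the operator $\Psi_{R,k}^{\delta}$ is self-adjoint on $L^{2}(I,\omega)$, whence $\|\Psi_{R,k}^{\delta}\|_{L^p\to L^p}=\|\Psi_{R,k}^{\delta}\|_{L^{p'}\to L^{p'}}$; it therefore suffices to treat $1\leq p<2$. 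The trivial endpoint bound
\[
\|\Psi_{R,k}^{\delta}\|_{L^{2}\to L^{2}}\leq \|\phi_{R,k}^{\delta}\|_{\infty}\leq C\,2^{-k\delta}
\]
is immediate from property (2) of $\phi_{R,k}^{\delta}$ together with Parseval.

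Next I would exploit that, by property (1), $\phi_{R,k}^{\delta}(\lambda_n)$ vanishes outside the spectral band $J_k=(R(1-2^{-k+1}),R(1-2^{-k-1}))$. Denoting by $\Pi_{R,k}$ the spectral projection onto $J_k$, one has $\Psi_{R,k}^{\delta}=\Psi_{R,k}^{\delta}\,\Pi_{R,k}$, so the preceding lemma applied with $M\asymp R$ and $M-m\asymp R/2^{k}$ yields
\[
\|\Psi_{R,k}^{\delta}f\|_{L^{2}}\leq C\,2^{-k\delta}\|\Pi_{R,k}f\|_{L^{2}}\leq C\,2^{-k(\delta+\frac{1}{2})}R^{\frac{\gamma(p')}{\varepsilon}+\frac{1}{2}}\|f\|_{L^{p}}.
\]
By self-adjointness this dualizes to the same bound for $\|\Psi_{R,k}^{\delta}\|_{L^{2}\to L^{p'}}$.

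The final step is to transfer these $L^{p}\to L^{2}$ and $L^{2}\to L^{p'}$ estimates, combined with the $L^{2}\to L^{2}$ bound, back to the target $L^{p}\to L^{p}$. Composing the $L^{p}\to L^{2}$ bound with the dual $L^{2}\to L^{p'}$ bound controls $(\Psi_{R,k}^{\delta})^{2}$ as an operator $L^{p}\to L^{p'}$; Riesz--Thorin interpolation with the $L^{2}\to L^{2}$ estimate, followed by H\"{o}lder on the bounded interval $I$, then produces an $L^{p}\to L^{p}$ bound for $\Psi_{R,k}^{\delta}$. The interpolation parameter is chosen so that the $R$-dependent factor $R^{\gamma(p')/\varepsilon+1/2}$ is exactly absorbed; a direct calculation shows this is possible precisely when $\delta>\delta(p)=\gamma(p')/\varepsilon$, and the residual dyadic factor takes the form $2^{-k\beta}$ with $\beta$ proportional to $\delta-\delta(p)>0$.

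The main obstacle is this elimination of the $R$-dependence: the bound coming directly from the lemma is $R$-dependent and, by itself, does not suffice for a uniform-in-$R$ estimate. The critical threshold $\delta(p)=\gamma(p')/\varepsilon$ arises exactly from the requirement that the $R$-growth inherited from the spectral cluster estimate be cancelled by the $k$-decay of the multiplier after interpolation. Making this bookkeeping precise, keeping track simultaneously of the arithmetic of $\varepsilon$ (eigenvalue growth) and $\gamma$ (eigenfunction $L^{p'}$-growth), is the delicate step; it follows the scheme developed in \cite{Casarino-M.Peloso} and produces both the $R$-uniformity and the strictly geometric decay $2^{-k\beta}$ needed to sum the dyadic pieces of $\Psi_R^{\delta}$.
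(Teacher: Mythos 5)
Your opening steps (the $L^2\to L^2$ bound $C2^{-k\delta}$ and the $L^p\to L^2$ bound $C2^{-k(\delta+1/2)}R^{\gamma(p')/\varepsilon+1/2}$ obtained from the lemma) match the paper, but the final step --- removing the $R$-dependence by interpolating with the $L^2\to L^2$ estimate and applying H\"older on the whole bounded interval $I$ --- cannot work. Every bound produced by your scheme has the form $A^{\theta}B^{1-\theta}$ with $A=C2^{-k(\delta+1/2)}R^{\gamma(p')/\varepsilon+1/2}$ and $B=C2^{-k\delta}$; since $B$ carries no negative power of $R$ and the H\"older factor $|I|^{1/p-1/2}$ is a fixed constant, the result is $R$-uniform only at $\theta=0$, i.e.\ only for the $L^2\to L^2$ estimate itself. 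For fixed $k$ the factor $R^{\gamma(p')/\varepsilon+1/2}$ is genuinely unbounded as $R\to\infty$, and nothing in your argument absorbs it: the hypothesis $\delta>\gamma(p')/\varepsilon$ controls powers of $2^{k}$, not powers of $R$. (A secondary problem: a bound on $(\Psi_{R,k}^{\delta})^{2}$ on $L^{q}$ does not yield a bound on $\Psi_{R,k}^{\delta}$ on $L^{q}$ for $q\neq 2$, since $\|T^{2}\|\leq\|T\|^{2}$ goes the wrong way.)

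The paper's mechanism for cancelling $R$ is spatial, not interpolative. It splits $I=I_{k,1}^{\alpha}\cup I_{k,2}^{\alpha}$, where $I_{k,1}^{\alpha}$ is the interval centered at $x_0$ of radius $r_k^{\alpha}=(2^{k}/R)^{\mu(p)}r$ with $\mu(p)=(\gamma(p')/\varepsilon+1/2)/(1/p-1/2)$. On $I_{k,1}^{\alpha}$ H\"older is applied with the measure of this \emph{shrinking} set, and it is exactly the factor $(r_k^{\alpha})^{1/p-1/2}\asymp(2^{k}/R)^{\gamma(p')/\varepsilon+1/2}$ that cancels $R^{\gamma(p')/\varepsilon+1/2}$ and leaves $2^{-k(\delta-\gamma(p')/\varepsilon)}$. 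On the complement $I_{k,2}^{\alpha}$ the $(L^p,L^2)$ cluster estimate is of no further use, and the paper instead runs a Schur-test kernel estimate based on the factorization $\phi_{R,k}^{\delta}(\mathcal{L})=g_{R,k}^{\delta}(\sqrt{\mathcal{L}})e^{-\mathcal{L}/R}$, the Gaussian bound for the heat kernel $p_{1/R}$, the finite propagation speed of $\cos(t\sqrt{\mathcal{L}})$, and the decay of $\hat g_{R,k}^{\delta}$. That off-center region is where most of the work lies, and it is entirely absent from your proposal.
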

		\begin{proof}
			Let $x_0=\frac{a+b}{2}\in (a,b)$ and $r=\frac{b-a}{4}>0$ such that $(x_0-r,x_0+r)\subseteq (a,b).$ Note that, for every $1\leq k\leq \left[\frac{\log(R)}{\log(2)}\right]=k_R$, we have  $r_{k}^{\alpha}=\left(\frac{2^{k}}{R}\right)^{\mu(p)}r< r$ where $\mu(p)=\frac{(\frac{\gamma(p')}{\varepsilon}+\frac{1}{2})}{(\frac{1}{p}-\frac{1}{2})}$ . So we notice that $I=(a,b)=(x_0-r_{k}^{\alpha},x_0+r_{k}^{\alpha})\cup\{y\in (a,b), |y-x_0|>r_{k}^{\alpha}\}=I_{k,1}^{\alpha}\cup I_{k,2}^{\alpha}$.\\
			
			We start by providing an $L^p$ bound of $\norm{\Psi_{R,k}^\delta}_{L^p(I^\alpha_{k,1},\omega)}$. To do so, we proceed in the way to reduce the $L^p$ inequality \eqref{lpineq} to certain $(L^p,L^2)$ inequality using the last lemma.
			
			Using Parseval formula and the fact that $\mbox{supp}\left(\phi_{R,k}^{\delta}\right)\subseteq (R_{k,1},R_{k,2})$,	where $R_{k,1}=R(1-2^{-k+1})$ and $R_{k,2}=R(1-2^{-k-1}),$ we have
			\begin{eqnarray*}
				\norm{\Psi_{R,k}^{\delta}f}^2_{L^2(I,w)}&=&\norm{\sum_{n=0}^{\infty}\phi_{R,k}^{\delta}(\lambda_n)a_n(f)\varphi_n}^2_{L^2(I,w)}\\&=&\norm{\sum_{R_{k,1}\leq\lambda_n\leq R_{k,2}}\phi_{R,k}^{\delta}(\lambda_n)a_n(f)\varphi_n}^2_{L^2(I,w)},
			\end{eqnarray*}
			Using the previous lemma with $m=R_{k,1}$, $M=R_{k,2}$ and the fact that $\displaystyle \sup_{t\in\mathbb{R}}|\phi_{R,k}^{\delta}(t)|\leq C2^{-k\delta}$, one gets
			\begin{eqnarray*}
				\norm{\Psi_{R,k}^{\delta}f}^2_{L^2(I,w)}&\leq&C2^{-2k\delta}\norm{\sum_{R_{k,1}\leq\lambda_n\leq R_{k,2}}a_n(f)\varphi_n}^2_{L^2(I,w)}\\&\leq&C2^{-2k\delta}R^{(2\frac{\gamma(p')}{\varepsilon})}\left(\frac{3R}{2^{k+1}}\right)\norm{f}^2_{L^p(I,w)}.	
			\end{eqnarray*}
			Hence, we have 
			\begin{equation}\label{eq0}
			\norm{\Psi_{R,k}^{\delta}f}_{L^2(I,w)}\leq C2^{-k(\delta+\frac{1}{2})}R^{(\frac{\gamma(p')}{\varepsilon})+\frac{1}{2})}\norm{f}_{L^p(I,w)}.
			\end{equation}
			
			By combining Hölder inequality and \eqref{eq0}, we obtain
			\begin{eqnarray}\label{eq6}
			\norm{\Psi_{R,k}^{\delta}f}_{L^p(I_{k,1}^{\alpha},w)}&\leq& (\mu(I_{k,1}))^{\frac{1}{p}-\frac{1}{2}}\norm{\Psi_{R,k}^{\delta}f}_{L^2(I_{k,1}^{\alpha},w)}\nonumber\\&\leq&(2r_{k}^{\alpha})^{\frac{1}{p}-\frac{1}{2}}\norm{\Psi_{R,k}^{\delta}f}_{L^2(I,w)}\nonumber\\
			&\leq&C2^{-k(\delta-\frac{\gamma(p')}{\varepsilon})}\norm{f}_{L^p(I,w)}.
			\end{eqnarray}
			
			Let $\displaystyle s_{R,k}^{\delta}(u,v)=\sum_{n=0}^{\infty}\phi_{R,k}^{\delta}(\lambda_n)\varphi_n(x)\varphi_n(y)$ be the kernel of $\Psi_{R,k}^{\delta}$. We just have to find an estimate of $||\Psi_{R,k}^{\delta}f||_{L^p(I_{k,2}^{\alpha},w)}$, so we will use the Schur test with the symmetric property of $s_{R,k}^{\delta},$  then it suffices to prove the following inequality
			$$\sup_{u\in I_{k,2}^{\alpha}}\norm{s_{R,k}^{\delta}(u,.)}_{L^1(I_{k,2}^{\alpha})}\leq C2^{-k\varepsilon}$$
			for some $\varepsilon>0$ and $C>0$ depending only on $p.$\\
			We consider $g_{R,k}^{\delta}(\lambda)=\left(1-\frac{\lambda^2}{R}\right)_+^{\delta}e^{\lambda^2/R}\phi(2^k(1-\frac{\lambda^2}{R}))$ satisfying the following properties, see \cite{Casarino-M.Peloso}
			\begin{enumerate}
				\item For every non-negative integer $i$ there exists a constant $C_i$ such that for all $s>0$
				\begin{equation}\label{eq2}
				\int_{|t|\geq s}|\hat{g}_{R,k}^{\delta}(t)|dt\leq C_is^{-i}R^{-i/2}2^{(i-\delta)k}
				\end{equation}
				\item
				\begin{equation}\label{eq4}
				\norm{g_{R,k}^{\delta}(\sqrt{\mathcal{L}})}_{(L^2,L^2)}\leq C2^{-k\delta}.
				\end{equation}
				
			\end{enumerate}
			
			For our purpose, we will consider such a positive self-adjoint operator $\mathcal{L}$ on $L^2(\mathbb{R})$ such that the semigroup $e^{-t\mathcal{L}}$, generated by $-\mathcal{L}$, has the kernel $p_t(x, y)$ obeying the Gaussian upper bound 
			\begin{equation}\label{eq7}
			|p_t(u,v)|\leq \frac{C}{\sqrt{t}}\exp{\left(-\frac{|u-v|^2}{Ct}\right)}.
			\end{equation}
			for a constant $C > 0$. (see \cite{E.B. Davies}) \\
			For all $ u \in \mathbb{R}$ and $t>0$, one gets the following estimate 
			\begin{equation}\label{eq3}
			\norm{p_t(u, .)}_{L^2(\mathbb{R})}\leq C.
			\end{equation}
			
			On the other hand, there exists $i_0\in \mathbb{N}$ such that $2^{i_o-1}<R^{\mu(p)}<2^{i_0}$ and we can see that $$I_{k,2}^{\alpha}\subseteq\displaystyle\cup_{\mu(p)k-i_0\leq j\leq 0}D_j$$ where $D_j=\{y, 2^jr\leq |y-x_0|<2^{j+1}r\}.$
			Since, $\mathcal{L}$ is a positive self-adjoint operator, then it's clear that
			\begin{equation}\label{eq1}
			\phi_{R,k}^{\delta}(\mathcal{L})=g_{R,k}^{\delta}(\sqrt{\mathcal{L}})\exp{\left(-\mathcal{L}/R\right)}.
			\end{equation}
			Hence one gets
			\begin{eqnarray*}
				s_{R,k}^{\delta}(u,v)&=&g_{R,k}^{\delta}(\sqrt{\mathcal{L}})\left(p_{1/R}(u, .)\right)(v)\\&=&g_{R,k}^{\delta}(\sqrt{\mathcal{L}})\left(p_{1/R}(u, .)\chi_{\{w,|x_0-w|<2^{j-1}r\}}\right)(v)+g_{R,k}^{\delta}(\sqrt{\mathcal{L}})\left(p_{1/R}(u, .)\chi_{\{w,|x_0-w|\geq2^{j-1}r\}}\right)(v)\\&=&s_{R,k}^{\delta,1}(u,v)+s_{R,k}^{\delta,2}(u,v).
			\end{eqnarray*}
			Using the fact that $g_{R,k}^{\delta}$ is an even function, with the inversion formula, we have $$g_{R,k}^{\delta}(\sqrt{\lambda})=\frac{1}{\sqrt{2\pi}}\int_{\mathbb{R}}\hat{g}_{R,k}^{\delta}(t)\cos{(t\sqrt{\lambda})}dt.$$ Hence, we obtain
			\begin{eqnarray*}
				s_{R,k}^{\delta,1}(u,v)&=&\frac{1}{\sqrt{2\pi}}\int_{\mathbb{R}}\hat{g}_{R,k}^{\delta}(t)\cos{(t\sqrt{\mathcal{L}})}\left(p_{1/R}(u, .)\chi_{\{w,|x_0-w|<2^{j-1}r\}}\right)(v)dt.
			\end{eqnarray*}
			Moreover, the operator  $\cos{(t\sqrt{\mathcal{L}})}$ is bounded in $L^2$ with support kernel $\mathcal{K}_t$ satisfying, see \cite{E.B. Davies, L.Song-J.Xiao-X.Yan}
			$$\mbox{Supp}\left(\mathcal{K}_t\right)=\{(u,v)\in \mathbb{R}^2, |u-v|\leq c_0|t|\}$$ 
			From \eqref{eq4}, \eqref{eq3} and the previous analysis, one gets
			\begin{eqnarray*}
				\norm{s_{R,k}^{\delta,1}(u,.)}_{L^1(D_j)}&=&\frac{1}{\sqrt{2\pi}}\norm{\int_{\mathbb{R}}\hat{g}_{R,k}^{\delta}(t)\cos{(t\sqrt{\mathcal{L}})}\left(p_{1/R}(u, .)\chi_{\{w,|x_0-w|<2^{j-1}r\}}\right)dt}_{L^1(D_j)}\\&=&\frac{1}{\sqrt{2\pi}}\norm{\int_{|t|>\frac{2^{j-1}r}{c_0}}\hat{g}_{R,k}^{\delta}(t)\cos{(t\sqrt{\mathcal{L}})}\left(p_{1/R}(u, .)\chi_{\{w,|x_0-w|<2^{j-1}r\}}\right)dt}_{L^1(D_j)}\\&\leq&\frac{\mu^{1/2}(D_j)}{\sqrt{2\pi}} \int_{|t|>\frac{2^{j-1}r}{c_0}}|\hat{g}_{R,k}^{\delta}(t)|\norm{p_{1/R}(u, .)}_{L^2(D_j)}dt\\&\leq&\frac{C}{\sqrt{2\pi}}2^{\frac{j+1}{2}}\int_{|t|>\frac{2^{j-1}r}{c_0}}|\hat{g}_{R,k}^{\delta}(t)|dt 
			\end{eqnarray*}
			Let $i>\frac{\mu+\frac{1}{2}}{2(\mu+1-\frac{1}{p})}$ where $\mu=\frac{\gamma(p')}{\varepsilon}>0.$ Then by \eqref{eq2}, there exists a constant $C_i>0$ such that 
			\begin{eqnarray*}
				\norm{s_{R,k}^{\delta,1}(u,.)}_{L^1(D_j)}&\leq&\frac{C_i}{\sqrt{2\pi}}2^{j/2}(\frac{2^j}{c_0})^{-i}R^{-i/2}2^{(i-\delta)k}\\&\leq&\frac{C_i}{\sqrt{2\pi}}c_0^{i}2^{(i-\delta)k}2^{j(1/2-i)}.
			\end{eqnarray*}
			Then, we obtain
			\begin{eqnarray*}
				\norm{s_{R,k}^{\delta,1}(u,.)}_{L^1(I_{k,2})}&\leq&\sum_{\mu(p)k-i_0\leq j\leq 0}\norm{s_{R,k}^{\delta,1}(u,.)}_{L^1(D_j)}\\&\leq&\frac{C_i}{\sqrt{2\pi}}c_0^{i}2^{(i-\delta)k}\sum_{\mu(p)k-i_0\leq j\leq 0}2^{j(1/2-i)}\\&\leq&\frac{C_i}{\sqrt{2\pi}}c_0^{i}2^{(i-\delta)k}2^{(i-1/2)(i_0-\mu(p)k+1)}\\&\leq&C'_i2^{-k\varepsilon_1}.
			\end{eqnarray*}
			From our assumption on  $i,$ $\varepsilon_1=\delta-i+(i-1/2)(\frac{\mu+\frac{1}{2}}{(\frac{1}{p}-\frac{1}{2})}))>0.$ \\
			Then, to have an estimate of the kernel $s_{R,k}^{\delta,1}$ on $L^1(I_{k,2}^{\delta})$, it suffices to find an estimate of the kernel $s_{R,k}^{\delta,2}$ on $L^1(I_{k,2}^{\delta})$.\\
			From \eqref{eq4}, \eqref{eq7} and using the fact that $R\leq R^{\mu(p)}$, one gets the following inequality
			\begin{eqnarray*}
				\norm{s_{R,k}^{\delta,2}(u,.)}_{L^1(D_j)}&=&\int_{D_j}|g_{R,k}^{\delta}(\sqrt{\mathcal{L}})\left(p_{1/R}(u,.)\chi_{\{w,|w-x_0|>2^{j-1}r\}}\right)(v)|dv\\&\leq&\norm{g_{R,k}^{\delta}(\sqrt{\mathcal{L}})}_{(L^2,L^2)}\norm{p_{1/R}(u,.)\chi_{\{w,|w-x_0|>2^{j-1}r\}}}_{L^2(D_j)}\\&\leq& C2^{-k\delta}\sqrt{R}e^{(-CR2^{2j-2})}\left(\mu(D_j)\right)^{1/2}\\&\leq&C2^{-k\delta}2^{\frac{i_0+j}{2}}e^{-C2^{2(i_0+j)}}.
			\end{eqnarray*}
			Hence, we conclude that
			\begin{eqnarray*}
				\norm{s_{R,k}^{\delta,2}(u,.)}_{L^1(I_{k,2})}&\leq&\sum_{\mu(p)k-i_0\leq j\leq 0}\norm{s_{R,k}^{\delta,2}(u,.)}_{L^1(D_j)}\\&\leq&C2^{-k\delta}\sum_{i=i_o+j\geq\mu(p)k}2^{\frac{i}{2}}e^{-C2^{2i}}\\&\leq&C'2^{-k\delta}.
			\end{eqnarray*}
			
		\end{proof}
		
		\begin{proposition}
			Let $1\leq p\leq 2$ and $\delta>\delta(p)=\frac{\gamma(p')}{\varepsilon}$, then for all $f\in L^p(I,w)$, we have
			\begin{equation}
			\norm{\psi_{R,0}^{\delta}f}_{L^p(I,w)}\leq C\norm{f}_{L^p(I,w)}.
			\end{equation}	
			where $C$ is a constant independent of $f$ and $R.$
		\end{proposition}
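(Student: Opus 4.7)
The plan is to carry out the argument of the preceding proposition at the single scale $k=0$: replace the cut-off $\phi$ by $\phi_0$ throughout. The crucial simplification is that $\phi_{R,0}^\delta(\lambda) := \phi_0(1-\lambda/R)(1-\lambda/R)_+^\delta$ is uniformly bounded in $\lambda$ and $R$ and, because $\phi_0 = 1 - \sum_{k\geq 1}\phi(2^k\cdot)$ vanishes near $0$, is supported in $\lambda \leq (1-c)R$ for some fixed $c>0$. Hence no $k$-summable decay factor is needed; one simply needs an $R$-uniform bound.

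Fix $x_0 = (a+b)/2$, $r=(b-a)/4$, set $r_0 = R^{-\mu(p)}r$, and split $I = I_{0,1}^\alpha \cup I_{0,2}^\alpha$ with $I_{0,1}^\alpha = (x_0-r_0,x_0+r_0)$. Applying the preceding Lemma with $m=0$, $M=(1-c)R$ (and using the uniform bound $|\phi_{R,0}^\delta| \leq C$) yields
$$
\norm{\psi_{R,0}^\delta f}_{L^2(I,w)} \leq C\, R^{\gamma(p')/\varepsilon + 1/2}\norm{f}_{L^p(I,w)}.
$$
Hölder's inequality on $I_{0,1}^\alpha$ combined with the identity $\mu(p)(1/p-1/2) = \gamma(p')/\varepsilon + 1/2$ then gives
$$
\norm{\psi_{R,0}^\delta f}_{L^p(I_{0,1}^\alpha,w)} \leq (2r_0)^{1/p-1/2}\norm{\psi_{R,0}^\delta f}_{L^2(I,w)} \leq C\norm{f}_{L^p(I,w)},
$$
uniformly in $R$.

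For the outer region $I_{0,2}^\alpha$, set $g_{R,0}^\delta(\lambda) := \phi_0(1-\lambda^2/R)(1-\lambda^2/R)_+^\delta e^{\lambda^2/R}$. Since $\phi_0$ keeps $1-\lambda^2/R$ bounded away from $0$ on its support, $g_{R,0}^\delta$ is even, $C^\infty$, compactly supported in $\lambda$, and satisfies $\|g_{R,0}^\delta\|_{C^N}\leq C_N$ uniformly in $R$. Spectral calculus gives the analog of \eqref{eq4} and the factorization $\phi_{R,0}^\delta(\mathcal L) = g_{R,0}^\delta(\sqrt{\mathcal L})\circ e^{-\mathcal L/R}$ (the $k=0$ version of \eqref{eq1}), while integration by parts gives the analog of \eqref{eq2}. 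Proceeding exactly as in the previous proposition, namely Fourier-inverting $g_{R,0}^\delta$ via its even-ness, exploiting finite propagation speed of $\cos(t\sqrt{\mathcal L})$, the Gaussian bound \eqref{eq7} on $p_{1/R}$, and the dyadic slicing $I_{0,2}^\alpha\subset\bigcup_j D_j$ from the preceding proof, one obtains $\sup_u \norm{s_{R,0}^\delta(u,\cdot)}_{L^1(I_{0,2}^\alpha)}\leq C$, and Schur's test concludes. The main technical point is establishing the $C^N$ bound on $g_{R,0}^\delta$ with constants independent of $R$; the apparent growth of $e^{\lambda^2/R}$ is harmless because $\lambda^2 \leq (1-c)R$ on the support, so the factor stays bounded and all derivatives can be controlled uniformly in $R$.
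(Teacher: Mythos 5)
Your proposal is correct and follows exactly the route the paper takes: the paper's own proof consists of the single remark that one repeats the argument of the preceding proposition with the same splitting $I=I_1\cup I_2$, $I_1=(x_0-r/R^{\mu(p)},x_0+r/R^{\mu(p)})$, and you have simply filled in the details (uniform boundedness and support $\lambda<R/2$ of $\phi_{R,0}^{\delta}$, the Lemma with $m=0$, $M=R/2$, H\"older on the inner interval via $\mu(p)(1/p-1/2)=\gamma(p')/\varepsilon+1/2$, and the kernel/Schur-test estimate on the outer region). No discrepancy with the paper's intended argument.
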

		\begin{proof}
			It suffices to use the same techniques as those used in the previous proof to get an estimate of $\norm{\psi_{R,0}^{\delta}f}_{L^p(I_1,w)}$ and $\norm{\psi_{R,0}^{\delta}f}_{L^p(I_2,w)}$ for all $f\in L^p(I,w),$ where $I=(a,b)=I_1\cup I_2$ with $I_1=(x_0-r^{\alpha}_0,x_0+r^{\alpha}_0)$ and $I_2=\{y, |y-x_0|>r^{\alpha}_0\}$ where $r^{\alpha}_0=\frac{r}{R^{\mu(p)}}.$ 
		\end{proof}
		To conclude the theorem's proof it suffices to find a uniform bound of  $\mathcal{R}_{R}^{\delta}$.
		\begin{proposition}
			Let $1\leq p\leq 2$ and $\delta>\delta(p)=\frac{\gamma(p')}{\varepsilon}$, then for all $f\in L^p(I,w)$, we have
			\begin{equation}
			\norm{\mathcal{R}_{R}^{\delta}f}_{L^p(I,w)}\leq C\norm{f}_{L^p(I,w)}.
			\end{equation}
			where $C$  depends only on $p$. 
		\end{proposition}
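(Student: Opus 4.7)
The plan is to decompose $\mathcal{R}_R^\delta = \sum_{k > k_R} \Psi_{R,k}^\delta$, where $k_R = \left[\frac{\log R}{\log 2}\right]$, and to bound each dyadic piece $\Psi_{R,k}^\delta$ directly in $L^2(I,w)$ before passing to $L^p(I,w)$ through a Hölder embedding. Unlike in the preceding proposition, the spatial splitting $I = I_{k,1}^\alpha \cup I_{k,2}^\alpha$ is unnecessary here: for $k > k_R$ the decay $2^{-k(\delta+1/2)}$ is strong enough to be summed against a crude global estimate without any refined kernel analysis.

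First I would revisit the Parseval/orthogonality computation from the preceding lemma. For each $k > k_R$ the function $\phi_{R,k}^\delta$ is supported in an interval $(m,M)$ with $M \leq R$ and $M-m = 3R/2^{k+1}$, and satisfies $\sup_t |\phi_{R,k}^\delta(t)| \leq C\, 2^{-k\delta}$. Applying the previous lemma and imitating the derivation of \eqref{eq0}, one obtains
\[
\norm{\Psi_{R,k}^\delta f}_{L^2(I,w)} \leq C\, 2^{-k(\delta+1/2)}\, R^{\gamma(p')/\varepsilon + 1/2}\, \norm{f}_{L^p(I,w)}.
\]
Since $w$ is a bounded weight on the bounded interval $I$, the embedding $L^2(I,w) \hookrightarrow L^p(I,w)$ holds for every $1 \leq p \leq 2$ with a constant depending only on $|I|$ and $\norm{w}_\infty$, so the same estimate remains valid with the $L^p$-norm on the left.

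The final step is a geometric summation. From $2^{k_R} \asymp R$ one obtains $\sum_{k > k_R} 2^{-k(\delta+1/2)} \leq C\, R^{-(\delta+1/2)}$, and therefore
\[
\norm{\mathcal{R}_R^\delta f}_{L^p(I,w)} \leq C\, R^{\gamma(p')/\varepsilon - \delta}\, \norm{f}_{L^p(I,w)},
\]
which is uniformly bounded (and in fact tends to $0$ as $R \to \infty$) under the standing hypothesis $\delta > \delta(p) = \gamma(p')/\varepsilon$. For the few small values of $R$ where $k_R$ is non-positive, only finitely many indices contribute and the estimate is immediate from conditions $(A)$ and $(B2)$ applied term by term.

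I do not anticipate a serious obstacle. The argument is essentially a reorganisation of material already exhibited for $\Psi_{R,k}^\delta$ in the range $1 \leq k \leq k_R$; the only delicate point is to notice that, once the spatial splitting is dropped, the growth $R^{\gamma(p')/\varepsilon + 1/2}$ produced by the lemma is exactly cancelled by the factor $R^{-(\delta+1/2)}$ coming from the geometric tail, with the strict inequality $\delta > \gamma(p')/\varepsilon$ supplying the negative power of $R$ that yields decay rather than mere boundedness.
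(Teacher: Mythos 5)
Your proposal is correct and follows essentially the same route as the paper: Hölder's embedding $L^2(I,w)\hookrightarrow L^p(I,w)$ on the bounded measure space, the $(L^p,L^2)$ lemma combined with $\sup_t|\phi_{R,k}^{\delta}(t)|\leq C2^{-k\delta}$ to get the bound $2^{-k(\delta+1/2)}R^{\gamma(p')/\varepsilon+1/2}$ for each dyadic piece, and geometric summation over $k>k_R$ with $2^{k_R}\asymp R$ to produce the factor $R^{-(\delta-\gamma(p')/\varepsilon)}$. The only cosmetic difference is that the paper applies Hölder once to the whole remainder and sums inside $L^2$, whereas you apply it piece by piece; the ingredients and the final estimate are identical.
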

		\begin{proof}
			From Holder's inequality and the previous lemma, we have
			\begin{eqnarray*}
				\norm{\mathcal{R}_{R}^{\delta}f}^2_{L^p(I,w)}&\leq&2^{2(\frac{1}{p}-\frac{1}{2})}\norm{\mathcal{R}_{R}^{\delta}f}^2_{L^2(I,w)}\\&\leq&2^{2(\frac{1}{p}-\frac{1}{2})}\sum_{k=K_R+1}^{\infty}\sum_{n=0}^{\infty}\norm{\phi_{R,k}^{\delta}(\lambda_n)a_n(f)\varphi_n}_{L^2(I,w)}\\&\leq&C2^{2(\frac{1}{p}-\frac{1}{2})}\sum_{k=K_R+1}^{\infty}2^{-2k\delta}\sum_{R_{k,1}\leq\lambda_n\leq R_{k,2}}\norm{a_n(f)\varphi_n}^2_{L^2(I,w)}\\&\leq&C2^{2(\frac{1}{p}-\frac{1}{2})}\sum_{k=K_R+1}^{\infty}2^{-2k(\delta+\frac{1}{2})}R^{2(\frac{1}{2}+\frac{\gamma(p')}{\varepsilon})}\norm{f}^2_{L^p(I,\omega)}\\&\leq&C2^{2(\frac{1}{p}-\frac{1}{2})}2^{-2(\delta+\frac{1}{2})\big(\left[\frac{\log(R)}{\log(2)}\right]+1\big)}R^{2(\frac{1}{2}+\frac{\gamma(p')}{\varepsilon})}\norm{f}^2_{L^p(I,\omega)}\\&\leq&C2^{2(\frac{1}{p}-\frac{1}{2})}R^{-2(\delta-(\frac{\gamma(p')}{\varepsilon}))}\norm{f}^2_{L^p(I,\omega)}
			\end{eqnarray*}
			Finally we obtain
			\begin{eqnarray*}
				\norm{\mathcal{R}_{R}^{\delta}f}_{L^p(I,w_{\alpha,\beta})}&\leq& C2^{(\frac{1}{p}-\frac{1}{2})}R^{-(\delta-(\frac{\gamma(p')}{\varepsilon}))}\norm{f}_{L^p(I,\omega_{\alpha,\beta})}\\&\leq&C(p)\norm{f}_{L^p(I,\omega_{\alpha,\beta})}.
			\end{eqnarray*}
		\end{proof}
	\end{proof}
	\begin{corollary}
		Under the notation and conditions of the previous Theorem, we have for all $f\in L^p(I,w)$
		\begin{equation}
		\Psi_R^{\delta}f \to f ~~\mbox{as}~~ R\to \infty.
		\end{equation}
	\end{corollary}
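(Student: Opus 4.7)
The corollary is a routine upgrade from uniform boundedness to strong convergence, and the plan is the classical one: combine the uniform bound $\|\Psi_R^{\delta}\|_{L^p\to L^p}\le C$ just established in Theorem~1 with convergence on a dense subspace, then conclude via a $3\varepsilon$ argument. So the first step is to identify a convenient dense subspace $\mathcal D\subset L^p(I,\omega)$ on which $\Psi_R^{\delta}g\to g$ is trivial.

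The natural choice is $\mathcal D:=\mathrm{span}\{\varphi_n:n\ge 0\}$. Indeed, for any $g=\sum_{n=0}^{N}c_n\varphi_n\in\mathcal D$, orthonormality gives $\Psi_R^{\delta}g=\sum_{n=0}^{N}(1-\lambda_n/R)_+^{\delta}c_n\varphi_n$, and since each coefficient $(1-\lambda_n/R)_+^{\delta}\to 1$ as $R\to\infty$, we get $\Psi_R^{\delta}g\to g$ in every norm on $I$ (the sum is finite, so no issues of convergence arise).

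The main point therefore becomes density of $\mathcal D$ in $L^p(I,\omega)$ for every $1\le p<\infty$. For $1\le p\le 2$, completeness of $\{\varphi_n\}$ in $L^2(I,\omega)$ gives $L^2$-density of $\mathcal D$, and the continuous inclusion $L^2(I,\omega)\hookrightarrow L^p(I,\omega)$ (valid because $I$ has finite measure and $\omega$ is bounded) yields $L^p$-density. The more delicate case is $p>2$, where one proceeds in two stages. First, standard regularity shows that smooth functions on $\bar I$ (or, if needed, those vanishing appropriately at the endpoints so integration by parts against $\mathcal L$ produces no boundary contribution) are dense in $L^p(I,\omega)$ since $\omega$ is bounded. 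Next, for such a smooth $h$, iterated integration by parts gives $a_n(h)=\lambda_n^{-k}\,a_n(\mathcal L^k h)$ for every $k$, whence $|a_n(h)|\le C_k\lambda_n^{-k}\le C_k'\,n^{-k\varepsilon}$ using condition (B2). Together with the polynomial bound $\|\varphi_n\|_{L^p(I,\omega)}\le Cn^{\gamma(p)}$ from (A), the partial sums $S_N h=\sum_{n\le N}a_n(h)\varphi_n$ satisfy
\begin{equation*}
\|h-S_N h\|_{L^p(I,\omega)}\le\sum_{n>N}|a_n(h)|\,\|\varphi_n\|_{L^p(I,\omega)}\le C\sum_{n>N}n^{-k\varepsilon+\gamma(p)},
\end{equation*}
which tends to $0$ once $k$ is chosen large enough. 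Hence $\mathcal D$ is dense in $L^p(I,\omega)$ for every $p\in[1,\infty)$.

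Finally, given $f\in L^p(I,\omega)$ and $\eta>0$, pick $g\in\mathcal D$ with $\|f-g\|_{L^p(I,\omega)}<\eta$; then by the triangle inequality and Theorem~1,
\begin{equation*}
\|\Psi_R^{\delta}f-f\|_{L^p(I,\omega)}\le\|\Psi_R^{\delta}(f-g)\|_{L^p(I,\omega)}+\|\Psi_R^{\delta}g-g\|_{L^p(I,\omega)}+\|g-f\|_{L^p(I,\omega)}\le(C+1)\eta+\|\Psi_R^{\delta}g-g\|_{L^p(I,\omega)},
\end{equation*}
and the last term vanishes as $R\to\infty$. Since $\eta$ is arbitrary, $\Psi_R^{\delta}f\to f$ in $L^p(I,\omega)$. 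The only non-automatic step is the density claim for $p>2$; the mild subtlety there is that the integration-by-parts identity $a_n(\mathcal L h)=\lambda_n a_n(h)$ requires the smooth approximants to lie in the domain of $\mathcal L^k$ (i.e.\ to respect the boundary behaviour encoded in the singular Sturm–Liouville problem), which can be arranged either by taking $h\in C_c^{\infty}(I^\circ)$ or by using smooth functions that meet the natural endpoint conditions of $\mathcal L_c^{(\alpha)}$ and $\mathcal L_c^{\alpha}$.
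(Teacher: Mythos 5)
Your proof is correct and follows essentially the same route as the paper: the uniform bound from Theorem~1, convergence on a dense class, and a $3\varepsilon$ argument, with the key ingredient being the decay estimate $a_n(h)=\lambda_n^{-k}a_n(\mathcal{L}^k h)$ combined with conditions (A) and (B2). The only (cosmetic) difference is that you take the dense class to be $\mathrm{span}\{\varphi_n\}$ and use that estimate to prove its density in $L^p(I,\omega)$, whereas the paper takes smooth functions as the dense class and uses the same estimate to show $\Psi_R^{\delta}f\to f$ directly there; you are in fact slightly more careful than the paper in flagging the boundary terms needed for the integration by parts.
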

	\begin{proof}
		{\bf Step1: } We prove that, for every $ f \in \mathcal{C}^{\infty}(I,\R) $, $\Psi^\delta_Rf \to f $ in $L^p(I,\omega)$. Note that
		\begin{eqnarray}
		\Big| \Big(1-\frac{\lambda_n}{R}\Big)^\delta_+ \scal{f,\varphi_n}_{L^2(I,\omega)} \Big| &\leq & \Big| \scal{f,\varphi_n}_{L^2(I,\omega)} \Big| = \frac{1}{\lambda_n}\Big|\scal{f,\mathcal{L}\varphi_n}_{L^2(I,\omega)} \Big| \nonumber\\
		&=&\frac{1}{\lambda_n}\Big|\scal{\mathcal{L}f,\varphi_n}_{L^2(I,\omega)} \Big| = \cdots = \frac{1}{\lambda_n^k}\Big|\scal{\mathcal{L}^k.f,\varphi_n}_{L^2(I,\omega)} \Big| \nonumber \\
		&\leq & n^{-k\varepsilon} \norm{\mathcal{L}^k.f}_{L^2(I,\omega)}.
		\end{eqnarray}
		Since $ \norm{\varphi_n}_{L^2(I,\omega)} \leq n^{\gamma(p)} $, it suffices to take $k$ big enough to have $ \gamma(p)-k\varepsilon < -1 $ and obtain the convergence of the series in $ L^p(I,\omega) $.\\
		Since $ \displaystyle \norm{\Psi^\delta_R.f-f}_2^2 = \sum_{n=0}^\infty \Big((1-\frac{\lambda_n}{R})^\delta_+ - 1\Big)^2 |a_n(f)|^2 \to 0 $ as $R \to \infty$,then the result remains true for $1\leq p<\infty$. \\
		{\bf Step2: }For all $ \varepsilon >0 $. By density of $\mathcal{C}^\infty_0(I,\R) $ in $L^p(I,\omega)$,  there exists $g \in \mathcal{C}^\infty_0(I,\R) $ such that $ \norm{f-g}_{L^p(I,\omega)} < \varepsilon $ and there exists $R>0$ such that $\norm{\Psi^\delta_R. f-\Psi^\delta_R.g}_{L^p(I,\omega)} < \varepsilon $.\\
		By writing, 
		$$ \norm{\Psi^\delta_R.f-f}_{L^p(I,\omega)} \leq \norm{\Psi^\delta_R. f-\Psi^\delta_R.g}_{L^p(I,\omega)} + \norm{\Psi^\delta_R. g-g}_{L^p(I,\omega)} + \norm{f-g}_{L^p(I,\omega)}, $$
		one gets the desired result.
	\end{proof}
To conclude for the proof of sufficient conditions of both theorems 1 and 2, it suffices to verify that the two considered bases satisfy conditions (A) and (B).We will prove this result only for the case of GPSWFs. The other case is almost identical.\\ 
We first recall that from \eqref{diff_oper1}, the GPSWFs are the eigenfunctions of the
Sturm-Liouville operator $\mathcal L_c^{(\alpha)}.$ 
Also, note that  the $(n+1)-$th eigenvalue $\chi_{n,\alpha}(c)$ of $\mathcal L_c^{(\alpha)}$ satisfies the following classical inequalities,
\begin{equation*}
n^2\leq n(n+2\alpha+1) \leq \chi_{n,\alpha}(c) \leq n (n+2\alpha+1) +c^2,\quad \forall n\geq 0.
\end{equation*}
Moreover, for every $0\leq m<M$ such that $M-m>1$, we have
\begin{eqnarray*}
	\sum_{\chi_{n,\alpha}(c)\in (m,M)}1&\leq&\sum_{n(n+2\alpha+1)\in(\max(0,m-c^2),M)}1\\&\leq&\sum_{(n+\alpha+1/2)^2-(\alpha+1/2)^2\in(\max(0,m-c^2),M)}1\\&\leq&\sum_{n\in \left((\max(0,m-c^2)+(\alpha+1/2)^2)^{\frac{1}{2}}-(1/2+\alpha),(M+(\alpha+1/2)^2)^{\frac{1}{2}}-(1/2+\alpha)\right)}1\\&\leq& C(M-m).
\end{eqnarray*} 
It follows that condition (B) is satisfied.\\
Form \cite{Boulsane-Jaming-Souabni} Lemma $2.6$, one can conclude that condition (A) is satisfied for weighted prolate spheroidal wave functions for $1<p<\infty $. 
Moreover, it has been shown in \cite{Karoui-Souabni1} that 
$ \norm{\psi^{(\alpha)}_{n,c}}_{\infty} \leq C \Big(\chi_{n,\alpha}(c)\Big)^{\frac{\alpha+1}{2}}.$
Then, by using \eqref{boundschi}, we obtain $\norm{\psi^{(\alpha)}_{n,c}}_{1} \leq C \Big(\chi_{n,\alpha}(c)\Big)^{\frac{\alpha+1}{2}}\leq C n^{\alpha+1}.$\\
 
\begin{remark}
	The uniform norm of the CPSWFs has been given in \cite{Karoui-Boulsane}.
\end{remark}

	\section{Proof of necessary condition}
	The transferring theorem  from the uniform boundedness of $\Psi_R^{\delta}$ to the uniform boundedness of the Hankel multiplier transform operator $\mathcal{M}_{\alpha}$ defined by $\mathcal{M}_{\alpha}(f)=\mathcal{H}_{\alpha}\left(\phi(.)\mathcal{H}_{\alpha}(f)\right)$ can be used to derive necessary condition.
	Note here that $\phi$ is a bounded function on $\R$, continuous except on a set of Lebesgue measure zero  and $\mathcal{H}_{\alpha}$ is the modified Hankel operator defined by $$\mathcal{H}_{\alpha}(f)(x)=\int_0^{\infty}\frac{J_{\alpha}(xy)}{(xy)^{\alpha}}f(y)y^{2\alpha+1}dy.$$
	From \cite{C.V} and the transferring theorem, the uniform boundedness of $\Psi_{R}^{\delta}$ holds true if and only if $\delta>\max\{2(\alpha+1)|\frac{1}{p}-\frac{1}{2}|-\frac{1}{2}, 0\}.$ It's easy to check that $\max\{2(\alpha+1)|\frac{1}{p}-\frac{1}{2}|-\frac{1}{2}, 0\}\geq \max\{\frac{\gamma_{\alpha}(p')}{2},0\}$ for every $p\not=2-\frac{1}{\alpha+3/2}$, then one gets our necessary condition.
	 To be more precise, let's study each transferring theorem separately.
	\subsection{GPSWFs's case}
	 Let's recall that the family of weighted prolate spheroidal wave functions $\{\ps(\cos \theta)\}_n$ form an orthonormal system on $(0,\pi)$ with respect to the measure $ (\sin\theta)^{2\alpha+1}  d\theta $. \\
	For a function $f(\theta) $ integrable on $(0,\pi)$ with respect to the measure defined above, we have formally 
	$$ f(\theta) = \sum_{n=0}^{\infty} a_n(f) \ps(\cos\theta) \qquad a_n(f)= \int_{0}^{\pi} f(\theta)\ps(\cos \theta) (\sin\theta)^{2\alpha+1}d\theta $$
	For $p\geq1$ and a function $f$ on $(0,\pi)$ we define a norm 
	$$ \norm{f}_p = \Bigg( \int_{0}^{\pi} |f(\theta)|^p (\sin\theta)^{2\alpha+1} \Bigg)^{1/p}. $$
	
	Before stating an adequate transferring theorem, let's define a GPSWFs-multiplier.
	
	\begin{definition}
		Let $\lambda >0$ be a sufficiently large real, the bounded sequence $\{ \phi(\frac{\chi^{1/2}_{n,\alpha}(c)}{\lambda}) \}_n $ is called a Weighted prolate multiplier if there exist a constant $C>0$ such that for every $f \in L^p(I,\omega_\alpha)$, we have 
		$$ \norm{\sum_{n=0}^{\infty}\phi(\frac{\chi^{1/2}_{n,\alpha}(c)}{\lambda}) a_n(f) \ps}_{L^p(I,\omega_\alpha)} \leq C \norm{f}_{L^p(I,\omega_\alpha)}. $$
		The smallest constant $C$ verifying this last inequality is written $\norm{\phi(\frac{\chi^{1/2}_{n,\alpha}(c)}{\lambda})}_p$. In the same context, the function $\phi$ is called an $\L^p$-Hankel transform multiplier if $\mathcal{M}_\alpha(f) = \h_\alpha(\phi(.)\h_\alpha(f)) $ is uniformly bounded on $L^p\left((0,\infty),\theta^{2\alpha+1} d\theta\right)$.
	\end{definition}
	
		\begin{theorem}[Transferring theorem]
		Let $1<p<\infty$, $0\leq\alpha <3/2$ and $\phi$ be a bounded function on $(0,\infty)$ continuous except on a set of Lebesgue measure zero such that $\{\phi(\frac{\chi^{1/2}_{n,\alpha}(c)}{\lambda})\}_n$ is a Weighted prolate multiplier for all large $\lambda>0$ and $\displaystyle \liminf_{\lambda\to \infty} \norm{\phi(\frac{\chi^{1/2}_{n,\alpha}(c)}{\lambda})}_p$ is finite then $\phi$ is an $L^p$-Hankel transform multiplier and we have 
		$$ \norm{\mathcal{M}_\alpha}_p \leq \liminf_{\lambda\to \infty}\norm{\phi\left(\frac{\chi^{1/2}_{n,\alpha}(c)}{\lambda}\right)}_p.$$
	\end{theorem}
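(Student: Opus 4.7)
The plan is the standard rescaling/transplantation strategy: fix a test function $f$ for the Hankel side, compress it to live on a small interval of $(0,\pi)$ near $0$, apply the assumed weighted prolate multiplier bound, and then pass to the limit $\lambda\to\infty$ to recover the Hankel multiplier estimate. Concretely, I would fix $f\in C_c^\infty((0,\infty))$ and, for $\lambda$ large, define $f_\lambda(\theta)=\lambda^{\alpha+1}f(\lambda\sin\theta)$ supported in an interval of size $O(1/\lambda)$ around $0$. The normalization is chosen so that $\|f_\lambda\|_{L^p((\sin\theta)^{2\alpha+1}d\theta)}\to \|f\|_{L^p(x^{2\alpha+1}dx)}$ after the change of variables $x=\lambda\sin\theta$.

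The first substantive step is a Mehler--Heine style asymptotic: one must show that, uniformly on compact sets in $x$ and on compact sets of $\xi=\chi_{n,\alpha}^{1/2}(c)/\lambda$,
\[
\lambda^{-\alpha-1/2}\,\ps(\cos(x/\lambda))\;\longrightarrow\;c_\alpha\,\frac{J_\alpha(\xi x)}{(\xi x)^\alpha}\,x^{\alpha+1/2}
\]
for an explicit constant $c_\alpha$. For $c=0$ this is the classical Mehler--Heine formula for Gegenbauer polynomials; for $c>0$ one obtains it because the zero-order perturbation $c^2x^2$ in $\mathcal L_c^{(\alpha)}$ is relatively negligible against $\chi_{n,\alpha}(c)$ once $n$ is large, so the leading asymptotic is identical. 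Combined with the eigenvalue bounds \eqref{boundschi}, this gives an equidistribution statement: the rescaled spectrum $\{\chi_{n,\alpha}^{1/2}(c)/\lambda\}_n$ has uniform density $1$ on every compact subset of $(0,\infty)$ as $\lambda\to\infty$.

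The second step is to translate the coefficients and the partial sum into a Riemann sum for the Hankel transform. Using the Mehler--Heine asymptotic and the change of variable $x=\lambda\sin\theta$, one checks $a_n(f_\lambda)\sim c_\alpha\lambda^{-1}\mathcal{H}_\alpha(f)(\chi_{n,\alpha}^{1/2}(c)/\lambda)\cdot\bigl(\chi_{n,\alpha}^{1/2}(c)/\lambda\bigr)^{2\alpha+1}$, so that
\[
\sum_{n=0}^{\infty}\phi\!\left(\tfrac{\chi_{n,\alpha}^{1/2}(c)}{\lambda}\right)a_n(f_\lambda)\,\ps(\cos(x/\lambda)),
\]
after multiplication by $\lambda^{-\alpha-1/2}$, is a Riemann sum that converges pointwise to $c_\alpha^2\,\mathcal H_\alpha(\phi\cdot\mathcal H_\alpha f)(x)=c_\alpha^2\mathcal M_\alpha(f)(x)$. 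On the other hand, the hypothesis gives
\[
\Bigl\|\sum_n\phi(\tfrac{\chi_{n,\alpha}^{1/2}(c)}{\lambda})a_n(f_\lambda)\ps\Bigr\|_{L^p(I,\omega_\alpha)}\le\bigl\|\phi(\tfrac{\chi_{n,\alpha}^{1/2}(c)}{\lambda})\bigr\|_p\,\|f_\lambda\|_{L^p(I,\omega_\alpha)}.
\]
Rescaling both sides via $x=\lambda\sin\theta$ and applying Fatou's lemma along a subsequence realizing the $\liminf$ yields
\[
\|\mathcal M_\alpha f\|_{L^p(x^{2\alpha+1}dx)}\le\Bigl(\liminf_{\lambda\to\infty}\bigl\|\phi(\tfrac{\chi_{n,\alpha}^{1/2}(c)}{\lambda})\bigr\|_p\Bigr)\|f\|_{L^p(x^{2\alpha+1}dx)},
\]
which is the desired conclusion on a dense subclass, extended by density to $L^p$.

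The main obstacle will be the Mehler--Heine asymptotic: producing it uniformly in $\xi$ on compact sets, while controlling the contribution of the perturbation $c^2x^2$, and doing so strongly enough (with an integrable majorant) to justify the dominated/Fatou passage to the limit in the rescaled $L^p$ norm. A secondary but routine annoyance is verifying that the tail of the series outside the support of $\phi$ contributes negligibly, which follows from the regularity of $f$ and the polynomial bound on $\|\ps\|_{L^p(I,\omega_\alpha)}$ from condition (A).
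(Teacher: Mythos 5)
Your overall strategy coincides with the paper's: rescale a compactly supported test function onto a shrinking neighbourhood of $0$ in $(0,\pi)$, apply the assumed weighted prolate multiplier bound, use a Mehler--Heine/Bessel asymptotic for $\psi_{n,c}^{(\alpha)}(\cos(\tau/\lambda))$ valid for $\chi_{n,\alpha}^{1/2}(c)/\lambda$ in compact sets to turn the partial sum into a Riemann sum for $\mathcal{H}_\alpha(\phi\cdot\mathcal{H}_\alpha f)$, and conclude with Fatou's lemma. The paper takes exactly this route, quoting the uniform approximation of the GPSWFs by Bessel functions from Karoui--Souabni for the main term, so there is no methodological divergence to report there.

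There is, however, a genuine gap in your treatment of the tail $n>N[\lambda]$, and it is not the routine step you describe. First, $\phi$ is only assumed bounded and a.e.\ continuous, not compactly supported, so the tail is not ``outside the support of $\phi$''; the full infinite series contributes for every $\lambda$. Second, condition (A) gives $\|\psi_{n,c}^{(\alpha)}\|_{L^{p}}\le Cn^{\gamma(p)}$, which \emph{grows} in $n$; without a compensating decay of $a_n(f_\lambda)$ that is uniform in $\lambda$, the tail cannot be summed. The paper's Lemma 2 supplies exactly this: it combines the uniform Jacobi approximation of $\psi_{n,c}^{(\alpha)}$ with the derivative identity $n(\sin\theta)^{2\alpha+1}\widetilde P_n^{(\alpha,\alpha)}(\cos\theta)=c_n\,\frac{d}{d\theta}\bigl[(\sin\theta)^{2\alpha+2}\widetilde P_{n-1}^{(\alpha+1,\alpha+1)}(\cos\theta)\bigr]$, integrates by parts to gain a factor $1/n$, and then applies Bessel's inequality to the derivative of the test function to get $\sum_{n>N[\lambda]}|a_n(g_\lambda)|^2=O(1/N)$ after rescaling, uniformly in $\lambda$. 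Third, your plan to pass to the limit via pointwise convergence of the whole series plus an integrable majorant is stronger than what this machinery delivers: the paper instead extracts a weakly convergent subsequence from the uniform $L^p$ and $L^2$ bounds, splits $G=G^N+H^N$, identifies $\lim_{\lambda}G^N$ by the Bessel asymptotic, and sends $N\to\infty$ through a diagonal argument. You would need either to carry out this weak-compactness/diagonal scheme or to actually exhibit the majorant, which is not obviously available. (A minor slip: with $f_\lambda(\theta)=\lambda^{\alpha+1}f(\lambda\sin\theta)$ the norm $\|f_\lambda\|_{L^p((\sin\theta)^{2\alpha+1}d\theta)}$ tends to $\|f\|_{L^p(x^{2\alpha+1}dx)}$ only when $p=2$; the correct normalization is $\lambda^{(2\alpha+2)/p}$, or, as in the paper, none at all, since the inequality is homogeneous in $f$.)
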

	
	\begin{proof}
		Let $g$ be an infinitely differentiable function with compact support in $[0,M]$ and put $g_\lambda(\theta)=g(\lambda \theta) $. Here $\lambda$ is a positive real so that $supp(g_\lambda) \subset [0,\pi]$.\\
		Recall that we have by assumption 
		\begin{equation}\label{hyp}
		\norm{\sum_{n=0}^{\infty}\phi(\chi^{1/2}_{n,\alpha}(c)/\lambda)a_n(g) \ps(\cos(.))}_p \leq \norm{\phi(\chi^{1/2}_{n,\alpha}(c)/\lambda)}_p \norm{g}_p.
		\end{equation}
		Via a simple change of variable, one can write
		$$ \lim_{\lambda \to \infty}\lambda^{2\alpha+2} \norm{g_\lambda}_p^p = \lim_{\lambda \to \infty} \int_{0}^{M} |g(\tau)|^p\Big(\lambda \sin(\tau/\lambda)\Big)^{2\alpha+1} d\tau = \int_{0}^{\infty} |g(\tau)|^p \tau^{2\alpha+1} d\tau .$$
		By using \eqref{hyp} together with Fatou's lemma, one gets
		\begin{eqnarray}
		& \displaystyle\int_{0}^{\infty}&\liminf_{\lambda\to \infty} \Big|\chi_{(0,\pi\lambda)}(\tau)\sum_{n=0}^{\infty} \phi(\chi^{1/2}_{n,\alpha}(c)/\lambda)a_n(g_\lambda) \ps(\cos\tau/\lambda)\Big|^p \tau^{2\alpha+1} d\tau \nonumber\\ 
		&=& \int_{0}^{\infty} \liminf_{\lambda\to \infty} \Big|\chi_{(0,\pi\lambda)}(\tau)\sum_{n=0}^{\infty} \phi(\chi^{1/2}_{n,\alpha}(c)/\lambda)a_n(g_\lambda) \ps(\cos\tau/\lambda)\Big|^p \lambda^{2\alpha+1}\sin(\tau/\lambda)^{2\alpha+1} d\tau \nonumber \\
		&\leq& \liminf_{\lambda\to \infty}\lambda^{2\alpha+1} \int_{0}^{\infty} \Big|\chi_{(0,\pi\lambda)}(\tau)\sum_{n=0}^{\infty} \phi(\chi^{1/2}_{n,\alpha}(c)/\lambda)a_n(g_\lambda) \ps(\cos\tau/\lambda)\Big|^p \sin(\tau/\lambda)^{2\alpha+1} d\tau \nonumber \\
		&\leq& \liminf_{\lambda\to \infty}\lambda^{2\alpha+2}\norm{\phi(\chi^{1/2}_{n,\alpha}(c)/\lambda)}_p \norm{g_\lambda}^p_p = \liminf_{\lambda\to \infty}\norm{\phi(\chi^{1/2}_{n,\alpha}(c)/\lambda)}_p \Bigg[\int_{0}^{\infty}|g(\tau)|^p \tau^{2\alpha+1} d\tau\Bigg]. \nonumber
		\end{eqnarray}
		Then there exists a sequence $\lambda_1<\lambda_2<\cdots<\lambda_p \to \infty$ that $G(\tau,\lambda) = \displaystyle \chi_{(0,\pi\lambda)}(\tau)\sum_{n=0}^{\infty} \phi(\chi^{1/2}_{n,\alpha}(c)/\lambda)a_n(g_\lambda) \ps(\cos\frac{\tau}{\lambda})$ converges weakly to a function $G(\tau)$. Furthermore, $G$ satisfies 
		$$ \Bigg[\int_{0}^{\infty}|G(\tau)|^p \tau^{2\alpha+1} d\tau\Bigg]^{1/p} \leq \liminf_{\lambda\to \infty}\norm{\phi(\chi^{1/2}_{n,\alpha}(c)/\lambda)}_p \Bigg[\int_{0}^{\infty}|g(\tau)|^p \tau^{2\alpha+1} d\tau\Bigg]^{1/p}. $$
		Let us now prove that $G = \h_{\alpha}(\phi. \h_{\alpha}(g))$. \\
		Let $$G(\tau,\lambda) = \chi_{(0,\pi\lambda)}(\tau)\Big[\sum_{n=0}^{N[\lambda]}+\sum_{N[\lambda]+1}^{\infty}\Big] \phi(\chi^{1/2}_{n,\alpha}(c)/\lambda)a_n(g_{\lambda}) \ps(\cos\tau/\lambda) = G^N(\tau,\lambda) + H^N(\tau,\lambda)$$
		We start by giving the following lemma that will be proved later,
		\begin{lemma}\label{rest}
			We have $$ \int_{0}^{\infty} \Big|H^N(\tau,\lambda)\Big|^2 \tau^{2\alpha+1} d\tau = O(\frac{1}{N}) \mbox{ uniformly in }\lambda $$
		\end{lemma}
		Therefore, by the diagonal argument, there exists a subsequence also noted $\{\lambda_j\}$ for a sake of clarity, such that $H^N(\tau,\lambda_j)$ converges weakly to a function $H^N(\tau)$ and $$\displaystyle \int_{0}^{\pi} \Big|H^N(\tau)\Big|^p \tau^{2\alpha+1} d\tau = O(\frac{1}{N^2}).$$\\
		Then, there exists a subsequence $H^{N_j}$ denoted for the same reason $H^{N}$ that converges to zero a.e . \\
		Since $ G^N(\tau,\lambda) = G(\tau,\lambda) - H^N(\tau,\lambda) $, $G^N(\tau,\lambda)$ converge weakly to a limit $G^N(\tau)$ and $G(\tau) = G^N(\tau) + H^N(\tau)$. Thus $G^N(\tau)$ converges to $G(\tau)$ almost everywhere. On the other hand, we will prove the following lemma 
		\begin{lemma}\label{part1}
			We have $$ \lim_{\lambda \to \infty} G^N(\tau,\lambda) = \int_{0}^{N} \phi(v) \h_\alpha.g(v) \frac{J_\alpha(v\tau)}{(v\tau)^\alpha}v^{2\alpha+1} dv, $$
		\end{lemma}
		which implies that  
		$$ G(\tau) =\int_{0}^{\infty} \phi(v) \h_\alpha.g(v) \frac{J_\alpha(v\tau)}{(v\tau)^\alpha}v^{2\alpha+1} dv, $$ and achieves our proof.
	\end{proof}
	
	\begin{proof}[Proof of Lemma 2]
		We have 
		\begin{eqnarray}\label{1}
		\int_{0}^{M}|H^N(\tau,\lambda)|^2 \Big(\lambda \sin\frac{\tau}{\lambda}\Big)^{2\alpha+1} d\tau &=& \lambda^{2\alpha+2}\int_{0}^{\pi} |H^N(\lambda\tau,\lambda)|^2(\sin\tau)^{2\alpha+1}d\tau \nonumber \\ 
		&=& \lambda^{2\alpha+2} \sum_{N[\lambda]+1}^{\infty} |\phi(\frac{n}{\lambda})|^2 |a_n(g_\lambda)|^2.
		\end{eqnarray}
		
		Recall that in \cite{Karoui-Souabni2}, authors have given the following uniform approximation of GPSWFs in term of Jacobi polynomials for $0\leq\alpha<3/2$,
		\begin{equation}\label{approx-jacobi}
		\ps(\cos \theta) = A_n \wJ_n^{(\alpha)}(\cos\theta)+R_{n,c}^{(\alpha)}(\cos\theta) \qquad 
		\norm{R_{n,c}}^{(\alpha)}_\infty \leq C_{\alpha,c} \frac{1}{2n+2\alpha+1}.
		\end{equation}
		We also know that (see for example \cite{Szego})
		\begin{equation}\label{jacobi}
		n (\sin\theta)^{2\alpha+1} \wJ_n^{(\alpha,\alpha)}(\cos\theta) = 2 \frac{h^{\alpha+1}_{n-1}}{h^{(\alpha)}_{n}} \frac{d}{d\theta} \Big[(\sin\theta)^{2\alpha+2} \wJ_{n-1}^{(\alpha+1,\alpha+1)}(\cos\theta)\Big]
		\end{equation}
		By combining \eqref{approx-jacobi} and \eqref{jacobi}, one gets
		$$ (\sin\theta)^{2\alpha+1} \ps(\cos\theta) = \frac{2}{n} \frac{h^{\alpha+1}_{n-1}}{h^{(\alpha)}_{n}} \frac{d}{d\theta} \Big[(\sin\theta)^{2\alpha+2} \wJ_{n-1}^{(\alpha+1,\alpha+1)}(\cos\theta)\Big] + R_{n,c}^{(\alpha)} (\cos\theta).$$
		Then, integrating by parts one gets
		\begin{eqnarray}
		a_n(g_\lambda) &=& \frac{C}{n} \int_{0}^{\pi}\frac{g'(\lambda\theta)}{\sin\theta} \wJ_{n-1}^{\alpha+1}(\cos\theta) (\sin \theta)^{2\alpha+3} d\theta + \int_{0}^{\pi} R_{n,c}^{(\alpha)}(\cos \theta) g(\lambda \theta) d\theta \nonumber \\
		&=& a_{n,1}(g_\lambda) + a_{n,2}(g_\lambda) \nonumber
		\end{eqnarray}
		Let's come back to \eqref{1}. We have by Bessel's inequality
		\begin{eqnarray}
		\lambda^{2\alpha+2} \sum_{N[\lambda]+1}^{\infty} |\phi(\frac{n}{\lambda})|^2 |a_{n,1}(g_\lambda)|^2 &\leq& C \lambda^{2\alpha+2} \Big[\frac{\lambda}{N(\lambda-1)}\Big]^2 \sum_{N[\lambda]+1}^{\infty} |\frac{n}{\lambda} a_{n,1}(g_\lambda)|^2 \nonumber \\
		&\leq& \frac{C}{N^2}\lambda^{2\alpha+2} \int_{0}^{\pi} \Big| \frac{g'(\lambda \theta)}{\sin \theta} \Big|^2 (\sin \theta)^{2\alpha+3} d\theta \nonumber \\
		&=& \frac{C}{N^2} \int_{0}^{M} |g'(\theta)|^2 \Big(\lambda\sin\frac{\theta}{\lambda}\Big)^{2\alpha+1} d\theta \nonumber \\
		&=& O(\frac{1}{N^2}) \mbox{  uniformly in } \lambda
		.
		\end{eqnarray} 
		On the other hand, using Cauchy-Schwarz's inequality
		\begin{eqnarray}
		\lambda^{2\alpha+2} \sum_{N[\lambda]+1}^{\infty} |\phi(\frac{n}{\lambda})|^2 |a_{n,2}(g_\lambda)|^2 &\leq& 
		C \lambda^{2\alpha+2} \sum_{N[\lambda]+1}^{\infty} \norm{R_{n,c}^{\alpha}}^2_2 \norm{g(\lambda . )}_2^2 \nonumber \\
		&\leq& C \sum_{N[\lambda]+1}^{\infty} \frac{1}{n^2} \int_{0}^{M}|g(\theta)|^2 \big(\lambda \sin\frac{\theta}{\lambda}\big)^{2\alpha+1} d\theta \nonumber \\
		&=& O(\frac{1}{N}). \nonumber
		\end{eqnarray}
		Then, one conclude that $$ \int_{0}^{M}|H^N(\tau,\lambda)|^2 \tau^{2\alpha+1} d\tau = O(\frac{1}{N}) \mbox{ uniformly in }\lambda. $$
		
	\end{proof}
	
	\begin{proof}[Proof of lemma 3]
		We use now the following uniform approximation of GPSWFs in term of Bessel function (we refer the reader once again to \cite{Karoui-Souabni2})
		\begin{equation}\label{approx-Jacobi}
		\ps(\cos\frac{\tau}{\lambda}) = A_\alpha(q) \frac{\chi_{n,c}^{1/4}S(\cos\frac{\tau}{\lambda})^{1/2} J_\alpha(\chi_{n,c}^{1/2} S(\cos\frac{\tau}{\lambda})) }{(\sin\frac{\tau}{\lambda})^{\alpha+1/2}(1-q\cos^2\frac{\tau}{\lambda})^{1/4} } + E_{n,c}(\cos\frac{\tau}{\lambda}) ,
		\end{equation}
		where $$ \Big|E_{n,c} (\cos\theta)\Big| \leq \frac{C.A_\alpha(q)}{(1-q)} \frac{(\sin\theta)^{1/2}}{(1-q\cos^2\theta)^{1/4}} \qquad \forall \theta\in [0,\pi]  \quad \mbox{and} \quad S(x) = \int_{x}^{1} \sqrt{\frac{1-qt^2}{1-t^2}}dt. $$
		
		Note that it has also been shown in \cite{Karoui-Bonami} that 
		$$ \frac{\sin\theta \sqrt{1-q\cos^2\theta}}{S(\cos\theta)} = 1 + \Big(\frac{q}{1-q}+\frac{3}{4}\Big)(1-\cos\theta) + o(1-\cos\theta) .$$
		Thus, we can write, for $n\leq N[\lambda]$,and by taking into account that $\sqrt{x}J_\alpha(x)$ is bounded then
		\begin{eqnarray}
		\frac{\ps(\cos\frac{\tau}{\lambda})}{\lambda^\alpha} &=& n^{1/2}\frac{J_{\alpha}(\frac{n\tau}{\lambda})}{\Big(\lambda.\sin\frac{\tau}{\lambda}\Big)^\alpha} -n^{1/2}\frac{J_{\alpha}(\frac{n\tau}{\lambda})}{\Big(\lambda.\sin\frac{\tau}{\lambda}\Big)^\alpha}\big(\frac{q}{1-q}+3/4\big)\frac{\tau^2}{4\lambda^{\alpha+2}}+ O(\frac{1}{n.\lambda^{\alpha+2}}) \nonumber \\
		&=& n^{1/2} J_{\alpha}(\frac{n\tau}{\lambda}) \Big(\frac{1}{\tau}\Big)^\alpha + o(\frac{1}{n}).
		\end{eqnarray}
		On the other hand,
		\begin{eqnarray}
		\lambda^\alpha a_n(g_\lambda) &=& \lambda^{\alpha-1} \int_{0}^{M}g(\tau) \ps(\cos\frac{\tau}{\lambda}) (\sin\frac{\tau}{\lambda})^{2\alpha+1} d\tau \nonumber\\
		&=& \frac{1}{\lambda^2} \Bigg[ A_\alpha(q) n^{1/2} \int_{0}^{\infty} g(\tau) J_\alpha(\frac{n\tau}{\lambda}) \Big(\lambda \sin\frac{\tau}{\lambda}\Big)^{\alpha+1} d\tau\Bigg] + o(\frac{1}{\lambda^2}) \nonumber\\
		&=& \frac{n^{1/2}}{\lambda^2} \int_{0}^{\infty} g(\tau)J_\alpha(\frac{n\tau}{\lambda}) \tau^{\alpha+1} d\tau + o(\frac{1}{\lambda^2}). \nonumber
		\end{eqnarray}
		Then, by combining the last two estimates, one gets
		$$ G^N(\tau,\lambda) = \sum_{n=0}^{N[\lambda]+1} \phi(\frac{n}{\lambda})\h_{\alpha}.g(\frac{n}{\lambda}) J_\alpha(\frac{n\tau}{\lambda}) \frac{1}{\tau^\alpha} \frac{n}{\lambda^2} +\frac{n}{\lambda^2} o(1).$$
		Therefore, by letting $\lambda \to \infty$, we conclude for the proof of lemma\ref{part1}.
		
	\end{proof}

	\subsection{CPSWF's case}
	
As for the example studied in the previous section, we start by establishing an adequate transferring theorem for the circular case. To do this, we introduce a suitable terminology.
\begin{definition}
	Let $\lambda>0$ be a sufficiently large real, a bounded sequence $\{m(\frac{\chi^{1/2}_{n,\alpha}(c)}{\lambda})\}_n$ is called to be a Circular prolate multiplier, if there exists a constant $C>0$ such that for every $f \in L^p(0,1)$, we have 
	$$ \norm{\sum_{n=0}^{\infty}m(\frac{\chi^{1/2}_{n,\alpha}(c)}{\lambda}) a_n(f) \varphi^{(\alpha)}_{n,c}}_{L^p(0,1)} \leq C \norm{f}_{L^p(0,1)}. $$
	The smallest constant $C$ verifying the last inequality is written $ \norm{m\Big(\frac{\chi^{1/2}_{n,\alpha}(c)}{\lambda}\Big)}_p$.  
\end{definition}

Here $ \mathcal{M}:= \mathcal{M}_{0} = \mathcal{H}_0\Big(m(.)\mathcal{H}_0(f)\Big)$ is the multiplier related to the Hankel transform operator.

\begin{theorem}[Circular transferring theorem]
	Let $1<p<\infty$, $\alpha\geq 1/2$ and $m$ be a bounded function on $(0,\infty)$ continuous except on a set of Lebesgue measure zero such that $\{m(\frac{\chi^{1/2}_{n,\alpha}(c)}{\lambda})\}_n$ is a Circular prolate multiplier for all large $\lambda>0$ and $\displaystyle \liminf_{\lambda\to \infty} \norm{m(\frac{\chi^{1/2}_{n,\alpha}(c)}{\lambda})}_p$ is finite then $m$ is an $L^p$-Hankel transform multiplier  and we have 
	$$ \norm{\mathcal{M}}_p \leq \liminf_{\lambda\to \infty}\norm{m\left(\frac{\chi^{1/2}_{n,\alpha}(c)}{\lambda}\right)}_p.$$
\end{theorem}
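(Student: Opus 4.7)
The plan is to parallel the proof of the GPSWFs transferring theorem above, with the scaling and asymptotics adapted to CPSWFs on $(0,1)$ with unweighted measure. Let $g\in C_c^\infty([0,M])$ and, for $\lambda>M$, set $g_\lambda(x)=g(\lambda x)$ so that $\mathrm{supp}(g_\lambda)\subset(0,1)$ and $g_\lambda\in L^p(0,1)$. Applying the Circular prolate multiplier hypothesis to $g_\lambda$ and performing the change of variable $\tau=\lambda x$ rescales the inequality to an integral on $(0,\lambda)$ in the $\tau$-variable, and one notes that $\lambda\,\|g_\lambda\|_{L^p(0,1)}^p\to\|g\|_{L^p(0,\infty)}^p$ as $\lambda\to\infty$.

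Introduce the test object
\[
G(\tau,\lambda)=\chi_{(0,\lambda)}(\tau)\sum_{n=0}^{\infty}m\bigl(\chi^{1/2}_{n,\alpha}(c)/\lambda\bigr)\,a_n(g_\lambda)\,\varphi^{(\alpha)}_{n,c}(\tau/\lambda),
\]
multiplied by the appropriate power of $\lambda$ so that its $L^p(0,\infty)$ norm matches, in the limit, the Hankel-side scaling. Fatou's lemma together with the multiplier bound yields a uniform estimate on $\|G(\cdot,\lambda)\|_{L^p}$, from which I extract a sequence $\lambda_j\to\infty$ along which $G(\cdot,\lambda_j)\rightharpoonup G(\cdot)$ weakly in $L^p(0,\infty)$, with $\|G\|_{L^p}\leq \liminf_{\lambda\to\infty}\|m(\chi^{1/2}_{n,\alpha}(c)/\lambda)\|_p\,\|g\|_{L^p(0,\infty)}$. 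It then remains to identify $G$ with $\mathcal{M}g$.

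To that end, I would split $G=G^N+H^N$ at index $n=N[\lambda]$. For the tail $H^N$, integrating by parts twice against $g_\lambda$ using the Sturm-Liouville equation \eqref{differ_operator2} transfers $\mathcal{L}_c^{\alpha}$ onto $g_\lambda$ and produces a factor $\chi^{-1}_{n,\alpha}(c)$ at each step; in view of \eqref{boundschi2} this gives $\|H^N(\cdot,\lambda)\|_{L^2(0,\infty)}^2=O(1/N^2)$ uniformly in $\lambda$, so a diagonal extraction yields $H^N\to 0$ almost everywhere. For the partial sum $G^N$, I would substitute a uniform Bessel asymptotic of the form
\[
\varphi^{(\alpha)}_{n,c}(y)\;\approx\;\sqrt{\chi^{1/2}_{n,\alpha}(c)\,y}\;J_\alpha\bigl(\chi^{1/2}_{n,\alpha}(c)\,y\bigr),
\]
valid for $y\in(0,1)$ with a remainder summable over $n\leq N[\lambda]$. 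Inserted with $y=\tau/\lambda$, the outer expansion becomes a Riemann sum on the spectrum $\{\chi^{1/2}_{n,\alpha}(c)/\lambda\}$ for the integral defining the composition $\mathcal{H}_\alpha(m(\cdot)\mathcal{H}_\alpha g)$; a parallel treatment of $a_n(g_\lambda)$ identifies the corresponding Hankel coefficients of $g$. Letting first $\lambda\to\infty$ and then $N\to\infty$ gives $G=\mathcal{M}g$, which combined with the Fatou bound above completes the proof.

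The main obstacle is securing the Bessel asymptotic of $\varphi^{(\alpha)}_{n,c}$ with a sufficiently sharp and uniform remainder: the Sturm-Liouville operator \eqref{differ_operator1} carries an inverse-square singularity $-\tfrac{1/4-\alpha^2}{x^2}$ at the origin, so the argument requires a Langer-type WKB approximation rather than a naive one, with an error controlled uniformly in both $n$ and $c$ so that it survives the tail summation. This is the CPSWFs analogue of \eqref{approx-Jacobi}, and the hypothesis $\alpha\geq 1/2$ enters precisely here to make the singular potential $(1/4-\alpha^2)/x^2$ non-positive, so that the Bessel-type comparison is clean. Once this approximation is in hand, the remaining steps close exactly as in the GPSWFs case.
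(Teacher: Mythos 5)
Your overall architecture matches the paper's (rescale a test function, Fatou plus weak compactness, split the series at $n=N[\lambda]$, kill the tail by moving the Sturm--Liouville operator onto the test function, identify the main term as a Riemann sum via uniform Bessel asymptotics). The gap is in \emph{where} you rescale. You dilate at the origin, $g_\lambda(x)=g(\lambda x)$, and aim at the limit operator $\mathcal{H}_\alpha\big(m(\cdot)\mathcal{H}_\alpha g\big)$. But the conclusion of the theorem, as the paper sets it up immediately before the statement, concerns $\mathcal{M}=\mathcal{M}_0=\mathcal{H}_0\big(m(\cdot)\mathcal{H}_0(f)\big)$, the \emph{order-zero} modified Hankel multiplier acting on $L^p\big((0,\infty),\theta\,d\theta\big)$. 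The paper reaches it by substituting $x=\cos(\theta/\lambda)$ and testing against $G_\lambda=g_\lambda\circ\arccos$, which concentrates at the endpoint $x=1$: there $dx$ becomes $\sin(\theta/\lambda)\,d\theta$, producing the measure $\theta\,d\theta$ in the limit, and the Karoui--Mehrzi uniform approximation gives $\varphi^{\alpha}_{n,c}(\cos(\theta/\lambda))\approx\chi_{n,\alpha}^{1/4}(c)\,J_0\big(\chi_{n,\alpha}^{1/2}(c)\,\theta/\lambda\big)$, whence $\mathcal{H}_0$. Multiplier classes for Hankel transforms of different orders, and on weighted versus unweighted $L^p$ spaces, do not coincide for $p\neq 2$, so your argument, even if completed, proves a statement about a different operator; in particular it would not plug into the necessary condition of Theorem 2, whose exponent $\gamma(p)$ (critical at $p=4$) is calibrated to the order-zero case via \cite{C.V}.

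There is also an analytic obstruction to closing your version as outlined. In your scaling regime $y=\tau/\lambda$ with $\chi^{1/2}_{n,\alpha}(c)/\lambda\to v$, one has $y\asymp\gamma_n\sim\sqrt{\alpha^2-1/4}\,\chi^{-1/2}_{n,\alpha}(c)$, i.e., you sit exactly at the transition point where the available uniform approximations change character: for $x>\gamma_n$ one has the Liouville--Green form with phase $\chi^{1/2}_{n,\alpha}(c)S(x)$, and $S$ does not linearize to $x$ near the origin since $S(x)\to S(0)\neq 0$, while for $x<\gamma_n$ one must switch to the Jacobi-polynomial regime. Hence the clean ansatz $\varphi^{(\alpha)}_{n,c}(y)\approx\sqrt{\chi^{1/2}_{n,\alpha}(c)\,y}\;J_\alpha\big(\chi^{1/2}_{n,\alpha}(c)\,y\big)$, with a normalization constant uniform in $n$ and a remainder summable over $n\leq N[\lambda]$, is precisely what is \emph{not} available in the region where you need it; the sign of $1/4-\alpha^2$ for $\alpha\geq 1/2$ does not resolve this connection problem. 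The remedy is the paper's change of variable at $x=1$, where the whole limiting regime lies inside the single $J_0$-zone $\theta\in(0,t_n)$ (with $t_n\to\pi/2$), after which your tail estimate and Riemann-sum identification go through essentially as you describe.
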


\begin{proof}
Let $\lambda>0$ and $g\in C^{\infty}_c(0,\infty)$  supported in $(0,M)$ such that $\lambda> \frac{2}{\pi}M$. Let $g_{\lambda}(\tau)=g(\lambda \tau)$ for every $\tau\in (0,1)$ and $G_{\lambda}=g_{\lambda}\circ\arccos.$

By asymption, we have

\begin{equation*}
\norm{\sum_{n=0}^{\infty}m\left(\frac{\chi^{1/2}_{n,\alpha}(c)}{\lambda}\right) a_n(G_{\lambda}) \varphi_n}_{L^p\left(0,1\right)}\leq\norm{m(\frac{\chi^{1/2}_{n,\alpha}(c)}{\lambda})}_p\norm{G_{\lambda}}_{L^p\left(0,1\right)}.
\end{equation*}

Then, we get
\begin{equation*}
\norm{\chi_{(0,\lambda\frac{\pi}{2})}\sum_{n=0}^{\infty}m\left(\frac{\chi^{1/2}_{n,\alpha}(c)}{\lambda}\right) a_n\left(G_{\lambda}\right) \varphi_n(\cos(\frac{.}{\lambda}))}^p_{L^p((0,\infty),\sin(\frac{.}{\lambda}))}\leq\norm{m(\frac{\chi^{1/2}_{n,\alpha}(c)}{\lambda})}^p_p\norm{g}^p_{L^p\left((0,\infty),\sin(\frac{.}{\lambda})\right)}
\end{equation*}
We denote by $$F_{\lambda}(\theta)=\chi_{(0,\lambda\frac{\pi}{2})}(\theta)\sum_{n=0}^{\infty}m\left(\frac{\chi^{1/2}_{n,\alpha}(c)}{\lambda}\right) a_n\left(G_{\lambda}\right) \varphi_n\left(\cos(\frac{\theta}{\lambda})\right),$$
hence we have
\begin{equation}\label{e1}
\norm{F_{\lambda}}^p_{L^p((0,\infty),\sin(\frac{.}{\lambda}))}\leq\norm{m(\frac{\chi^{1/2}_{n,\alpha}(c)}{\lambda})}^p_p\norm{g}^p_{L^p((0,\infty),\sin(\frac{.}{\lambda}))}.
\end{equation}
By using \eqref{e1}, Fatou's Lemma and the fact that $\displaystyle\lim_{\lambda\to\infty}\lambda\sin(\frac{\theta}{\lambda})=\theta$, we obtain
\begin{equation}
\norm{\displaystyle\liminf_{\lambda\to\infty}F_{\lambda}}^p_{L^p((0,\infty),\theta d\theta)}\leq\displaystyle\liminf_{\lambda\to\infty}\norm{m(\frac{\chi^{1/2}_{n,\alpha}(c)}{\lambda})}^p_p\norm{g}^p_{L^p((0,\infty),\theta d\theta)}
\end{equation}
Let $L=\displaystyle\liminf_{\lambda\to\infty}\norm{m(\frac{\chi^{1/2}_{n,\alpha}(c)}{\lambda})}_p$, then there exists a sequence of $(\lambda_j)_{j\in\mathbb{N}}$ such that $\displaystyle\lim_{j\to\infty}\lambda_j=+\infty$ verifying 
\begin{equation}\label{eq8}
\norm{F_{\lambda_j}}_{L^p((0,\infty),\theta d\theta)}\leq (L+1/j)\norm{g}_{L^p((0,\infty),\theta d\theta)}.
\end{equation}
On the other hand, as $m$ is bounded and from Perseval's formula, we have
\begin{equation}\label{eq9}
\norm{F_{\lambda_j}}_{L^2((0,\infty),\theta d\theta)}\leq (L+1/j)\norm{g}_{L^2((0,\infty),\theta d\theta)}.
\end{equation}
From \eqref{eq8} and \eqref{eq9} there exists a subsequence of $(\lambda_j)_{j\in\mathbb{N}}$ denoted also $(\lambda_j)_{j\in\mathbb{N}}$ such that the sequence $\{F_{\lambda_j}\}$ converge weakly to a function $F$ in $L^p\cap L^2((0,\infty),\theta d\theta)$ and satisfying the following inequality
\begin{equation}\label{eq10}
\norm{F}_{L^p((0,\infty),\theta d\theta)}\leq L\norm{g}_{L^p((0,\infty),\theta d\theta)}.
\end{equation}
Our purpose now is to show that $F=\mathcal{H}_0\left(m(.)\mathcal{H}_0(g)\right)$ almost everywhere on $(0,\infty).$\\
Let $N\geq 1$ and $\theta\in (0,\infty)$
\begin{eqnarray*}
	F_{\lambda}(\theta)&=&\chi_{(0,\lambda)}(\theta)\sum_{n=0}^{\infty}m\left(\frac{\chi^{1/2}_{n,\alpha}(c)}{\lambda}\right) a_n(G_{\lambda}) \varphi_n\left(\cos(\frac{\theta}{\lambda})\right)\\&=&\chi_{(0,\lambda)}(\theta)\left[\sum_{n=0}^{N[\lambda]}+\sum_{n=N[\lambda]+1}^{\infty}\right]m\left(\frac{\chi^{1/2}_{n,\alpha}(c)}{\lambda}\right) a_n(G_{\lambda})\varphi_n\left(\cos(\frac{\theta}{\lambda})\right)\\&=&F^N_{\lambda}(\theta)+K^N_{\lambda}(\theta).
\end{eqnarray*}
Using \eqref{differ_operator2}, the function $F(\theta)=\varphi^{\alpha}_{n,c}\left(\cos(\theta)\right)$ satisfies the following differential equation
\begin{eqnarray*}
	\mathcal{L}(F)(\theta)&=&-F''(\theta)-\frac{\cos(\theta)}{\sin(\theta)}F'(\theta)+\left(c^2\cos^2(\theta)-\frac{1/4-\alpha^2}{\cos^2(\theta)}\right)F(\theta)\\&=&\chi_{n,\alpha}(c)F(\theta).
\end{eqnarray*}
Using the symmetry of $\mathcal{L}$ on $C^{\infty}_c(0,\infty)$, we obtain
\begin{eqnarray*}
	a_n(G_{\lambda})&=&\scal{G_{\lambda},\varphi^{\alpha}_{n,c}}_{L^2(0,1)}\\&=&\frac{1}{\chi_{n,\alpha}(c)}\int_0^{\frac{\pi}{2}}g_{\lambda}(\theta)\chi_{n,\alpha}(c)\varphi^{\alpha}_{n,c}\left(\cos(\theta)\right)\sin(\theta)d\theta\\&=&\frac{1}{\chi_{n,\alpha}(c)}\int_0^{\frac{\pi}{2}}g_{\lambda}(\theta)\mathcal{L}(F)(\theta)\sin(\theta)d\theta\\&=&\frac{\lambda^2}{\chi_{n,\alpha}(c)}\int_0^{\frac{\pi}{2}}\frac{1}{\lambda^2}\mathcal{L}\left(g_{\lambda}\right)(\theta)F(\theta)\sin(\theta)d\theta=\frac{\lambda^2}{\chi_{n,\alpha}(c)}a_n\left(\frac{1}{\lambda^2}\mathcal{L}\left(g_{\lambda}\right)\right).
\end{eqnarray*}

 Using the previous equality, Perseval's formula and the fact that $m$ is bounded with the well known inequality $\frac{2}{\pi}\theta\leq \sin(\theta)\leq\theta$, for $0\leq \theta\leq \frac{\pi}{2}$ and \eqref{boundschi2}, we obtain
\begin{eqnarray*}
	\norm{K_{\lambda}^{N}}_{L^2((0,\infty),\theta d\theta)}&=&\left[\int_0^{\infty}\chi_{(0,\lambda\frac{\pi}{2})}(\theta)\left|\sum_{n=N[\lambda]+1}^{\infty}m\left(\frac{\chi^{1/2}_{n,\alpha}(c)}{\lambda}\right)  a_n(G_{\lambda})\varphi_n(\cos(\frac{\theta}{\lambda}))\right|^2 \theta d\theta\right]^{1/2}\\&=&\left[\int_0^{\lambda\frac{\pi}{2}}\left|\sum_{n=N[\lambda]+1}^{\infty}m\left(\frac{\chi^{1/2}_{n,\alpha}(c)}{\lambda}\right)  a_n(G_{\lambda})\varphi_n(\cos\left(\frac{\theta}{\lambda})\right)\right|^2 \theta d\theta\right]^{1/2}\\&\leq&\sqrt{\frac{\pi}{2}}\left[\lambda\int_0^{\lambda\frac{\pi}{2}}\left|\sum_{n=N[\lambda]+1}^{\infty}m\left(\frac{\chi^{1/2}_{n,\alpha}(c)}{\lambda}\right)  a_n(G_{\lambda})\varphi_n\left(\cos(\frac{\theta}{\lambda})\right)\right|^2 \sin\left(\frac{\theta}{\lambda}\right) d\theta\right]^{1/2}\\&=&\sqrt{\frac{\pi}{2}}\left[\lambda^2\sum_{n=N[\lambda]+1}^{\infty}m^2\left(\frac{\chi^{1/2}_{n,\alpha}(c)}{\lambda}\right)  a^2_n(G_{\lambda})\right]^{1/2}\\&\leq&C\,\sqrt{\frac{\pi}{2}}\left[\frac{\lambda^2}{N^4}\sum_{n=N[\lambda]+1}^{\infty}  a^2_n\left(\frac{1}{\lambda^2}\mathcal{L}(g_{\lambda})\right)\right]^{1/2}\\&\leq&\frac{C}{N^2}\,\sqrt{\frac{\pi}{2}}\left[\norm{g''+\frac{g'}{\theta}}_{L^2\left((0,\infty),\theta d\theta\right)}+C\norm{g}_{L^2\left((0,\infty),\theta d\theta\right)}\right]
\end{eqnarray*}
Then we obtain $\norm{K^N_{\lambda}}_{L^2((0,\infty),\theta d\theta)}=O(\frac{1}{N^2})$ uniformly in $\lambda.$ \\
Thus by the diagonal argument there exists a subsequence of $\{\lambda_j\}$ noted again $\{\lambda_j\}$ such that for every $N\geq 1$, $\{K^N_{\lambda_j}\}_{j\in\mathbb{N}}$ converge weakly to a function $K^N$ in $L^2((0,\infty),\theta d\theta)$ satisfy $\norm{K^N}_{L^2((0,\infty),\theta d\theta)}=O(\frac{1}{N^2})$, one conclude that there exists a sequence $\{N_k\}$ such that $\{K^{N_k}\}_{k\in\mathbb{N}}$ converge to zero almost everywhere on $(0,\infty)$.
Let $F^{N_k}=F-K^{N_k}$, clearly we have $\{F_{{\lambda_j}}^{N_k}\}_{j\in\mathbb{N}}$ converge weakly to $F^{N_k}$ in $L^2(0,\infty)$ for every $k\in \mathbb{N}$. Moreover, $\{F^{N_k}\}$ converge to $F$ almost everywhere on $(0,\infty).$\\
We prove now the following equality
\begin{equation}\label{eq11}
\lim_{j\to\infty}F_{{\lambda_j}}^{N_k}(x)=\int_0^{N_k}m(y)J_{0}(xy)\mathcal{H}_{0}(g)(y)ydy
\end{equation}
for every $x\in (0,\infty)$, the weak convergence of $\{F_{{\lambda_j}}^{N_k}\}_{j\in\mathbb{N}}$ to $F^{N_k}$, in particular, $\scal{F_{{\lambda_j}}^{N_k},\chi_{(r,s)}}$ converge to $\scal{F^{N_k},\chi_{(r,s)}}$ for every $0<r<s<\infty$ and by using the Lebesgue dominated convergence theorem which give as $\scal{F_{{\lambda_j}}^{N_k},\chi_{(r,s)}}$ converge to $\scal{\mathcal{H}_{\alpha}\left(\chi_{(0,N_k\pi)}m(.)\mathcal{H}_{\alpha}(g)\right),\chi_{(r,s)}}$, one conclude that $F^{N_k}=\mathcal{H}_{\alpha}\left(\chi_{(0,N_k\pi)}m(.)\mathcal{H}_{\alpha}(g)\right)$ almost everywhere on $(0,\infty).$ Finally, as $k\to \infty$, we get our purpose. \\
For the proof of \eqref{eq11}, we need the uniform approximation of the family of CPSWFs on $(0,1)$ which is given by the following estimates
\begin{equation}
\varphi_{n,c}^{\alpha}(\cos(\frac{\theta}{\lambda}))=(-1)^nB_n\left(\cos(\frac{\theta}{\lambda})\right)^{\alpha+1/2}P_n^{(0,\alpha)}\left(\cos(\frac{2\theta}{\lambda})\right)+\gamma_n^{\alpha+1/2}O(\frac{c^2}{n})
\end{equation}
for every $\theta\in\mathbb(\lambda t_n,\lambda\frac{\pi}{2})$, where $t_n=\arccos(\gamma_n)$ and $\gamma_n\sim\frac{\sqrt{\alpha^2-1/4}}{\chi_{n,\alpha}^{1/2}(c)}$.
\begin{equation}\label{eq12}
\varphi_{n,c}^{\alpha}(\cos(\frac{\theta}{\lambda}))=A_n\,\chi^{1/4}_{n,\alpha}(c)\frac{\sqrt{S(\cos(\frac{\theta}{\lambda}))}J_{0}\left(\chi^{1/2}_{n,\alpha}(c)S(\cos(\frac{\theta}{\lambda}))\right)}{(\sin(\frac{\theta}{\lambda}))^{\frac{1}{2}}r_n\left(\cos(\frac{\theta}{\lambda})\right)^{1/4}}+R_n(\cos(\frac{\theta}{\lambda}))
\end{equation}
for every $\theta\in\mathbb(0,\lambda t_n)$, where $A_n\sim 1,$ $r_n(t)=1-qt^2+\frac{1/4-\alpha^2}{\chi_{n,\alpha}^{1/2}(c)t^2}$ and  $\displaystyle\sup_{\theta\in\mathbb(0,t_n)}\left|R_n(\cos(\theta))\right|\leq \frac{C}{\chi^{1/2}_{n,\alpha}(c)}$, for more details see \cite{Karoui-Mehrzi}.\\
By a straightforward computation,  we have 
$$\frac{\sqrt{S(\cos(\frac{\theta}{\lambda}))}}{(\sin(\frac{\theta}{\lambda}))^{1/2}r_n\left(\cos(\frac{\theta}{\lambda})\right)^{1/4}}=1-\beta(q)(1-\cos(\frac{\theta}{\lambda}))+o(1-\cos(\frac{\theta}{\lambda}))$$
then, we can easily check that 
$$\varphi_{n,c}^{\alpha}(\cos(\frac{\theta}{\lambda}))=\chi_{n,\alpha}^{1/4}(c)J_{0}\left(\frac{\chi_{n,\alpha}^{1/2}(c)}{\lambda}\theta\right)+R_n(\cos(\frac{\theta}{\lambda})).$$
Let $N>0$ and $\lambda>\max\{\frac{2M}{\pi},N^3\}$. By \eqref{eq12}  we have, for every $n\leq N[\lambda]$
\begin{eqnarray*}
	a_n(G_{\lambda})&=&\scal{G_{\lambda},\varphi_{n,c}^{\alpha}}_{L^2(0,1)}\\&=&\frac{1}{\lambda}\int_0^{\lambda\frac{\pi}{2}}\left(g_{\lambda}\circ\arccos\right)(\cos(\frac{\theta}{\lambda}))\varphi_{n,c}^{\alpha}(\cos(\frac{\theta}{\lambda}))\sin(\frac{\theta}{\lambda})d\theta\\&=&\frac{\chi_{n,\alpha}^{1/4}(c)}{\lambda} \int_{0}^{\lambda \frac{\pi}{2}}J_{0}\left(\frac{\chi^{1/2}_{n,\alpha}(c)}{\lambda}\theta\right)g(\theta)\sin(\frac{\theta}{\lambda})d\theta-\frac{\chi_{n,\alpha}^{1/4}(c)}{\lambda} \int_{\lambda t_n}^{\lambda \frac{\pi}{2}}J_{0}\left(\frac{\chi^{1/2}_{n,\alpha}(c)}{\lambda}\theta\right)g(\theta)\sin(\frac{\theta}{\lambda})d\theta\\&+&\frac{(-1)^nB_n}{\lambda} \int_{\lambda t_n}^{\lambda \frac{\pi}{2}}g(\theta)P_n^{(0,\alpha)}\left(\cos(\frac{2\theta}{\lambda})\right)\left(\cos(\frac{\theta}{\lambda})\right)^{\alpha+1/2}\sin(\frac{\theta}{\lambda})d\theta+\frac{1}{n^{a}\chi^{1/4}_{n,\alpha}(c)}O(\frac{1}{\lambda^{b}})\\&=&\frac{\chi_{n,\alpha}^{1/4}(c)}{\lambda^2}\mathcal{H}_{0}(g)(\frac{\chi^{1/2}_{n,\alpha}(c)}{\lambda})+\frac{1}{n^{a}\chi^{1/4}_{n,\alpha}(c)}O(\frac{1}{\lambda^{b}}).
\end{eqnarray*}
where $a>1$ and $b>0.$
Indeed, using the fact that $\sup_{x>0}|\sqrt{x}J_{\alpha}(x)|\leq C_{\alpha}$, see \cite{A.YA. OLenko}, and $\lambda\sin(\frac{\theta}{\lambda})\leq \theta$ we have
\begin{eqnarray*}
	\left|\frac{\chi_{n,\alpha}^{1/4}(c)}{\lambda} \int_{\lambda t_n}^{\lambda \frac{\pi}{2}}J_{0}\left(\frac{\chi^{1/2}_{n,\alpha}(c)}{\lambda}\theta\right)g(\theta)\sin(\frac{\theta}{\lambda})d\theta\right|&\leq&\frac{1}{\lambda^{3/2}}\int_{\lambda t_n}^{\lambda \frac{\pi}{2}}\left|\frac{\chi_{n,\alpha}^{1/4}(c)}{\lambda^{1/2}}J_{0}\left(\frac{\chi^{1/2}_{n,\alpha}(c)}{\lambda}\theta\right)\right||g(\theta)|\theta d\theta\\&\leq&\frac{1}{\lambda^{3/2}}\left[\int_{\lambda t_n}^{\lambda \frac{\pi}{2}}\left|\frac{\chi_{n,\alpha}^{1/4}(c)\theta^{1/2}}{\lambda^{1/2}}J_{0}\left(\frac{\chi^{1/2}_{n,\alpha}(c)}{\lambda}\theta\right)\right|^2\frac{d\theta}{\theta}\right]^{1/2} \norm{\theta g}_{L^2(0,\infty)}\\&\leq&\frac{C_0}{\lambda^{3/2}}(\ln(\frac{\pi}{2t_n}))^{1/2} \norm{\theta g}_{L^2(0,\infty)}\\&\leq&\frac{C_0}{\lambda^{3/2}\chi_{n,\alpha}^{1/4}(c)} \norm{\theta g}_{L^2(0,\infty)}. 
\end{eqnarray*}
Moreover, using the fact that $\left|P_n^{(0,\alpha)}(\cos(\frac{2\theta}{\lambda}))\right|\leq P_n^{(0,\alpha)}(1)=O( n^{\alpha})$ and the deacreasing  cosinus function  with $|B_n|=O(n^{1/2})$, we obtain\\

$	\left|\displaystyle\frac{(-1)^nB_n}{\lambda} \int_{\lambda t_n}^{\lambda \frac{\pi}{2}}g(\theta)P_n^{(0,\alpha)}\left(\cos(\frac{2\theta}{\lambda})\right)\left(\cos(\frac{\theta}{\lambda})\right)^{\alpha+1/2}\sin(\frac{\theta}{\lambda})d\theta\right|$
\begin{eqnarray*}
	&\leq&\frac{|B_n|}{\lambda^2} \int_{\lambda t_n}^{\lambda \frac{\pi}{2}}\left|g(\theta)\right|\left|P_n^{(0,\alpha)}\left(\cos(\frac{2\theta}{\lambda})\right)\right|\left(\cos(\frac{\theta}{\lambda})\right)^{\alpha+1/2}\theta d\theta\\&\leq&\frac{C}{\lambda^2\chi^{1/4}_{n,\alpha}(c)} \norm{\theta^{3/2} g}_{L^2(0,\infty)}
\end{eqnarray*}
Finally, there exist a constant $a>1$ and $b>0$ such that
\begin{equation}
a_n(G_{\lambda})=\frac{\chi_{n,\alpha}^{1/4}(c)}{\lambda^2}\mathcal{H}_{0}(g)(\frac{\chi^{1/2}_{n,\alpha}(c)}{\lambda})+\frac{1}{n^a\chi^{1/4}_{n,\alpha}(c)}O(\frac{1}{\lambda^b}).
\end{equation}
Hence, we obtain\\
$F_{{\lambda_j}}^{N_k}(\theta)=\chi_{(0,\frac{\pi}{2}\lambda_j)}(\theta)\displaystyle\sum_{n=0}^{N_k[\lambda_j]}m\left(\frac{\chi^{1/2}_{n,\alpha}(c)}{\lambda_j}\right) a_n(G_{\lambda_j})\varphi^{\alpha}_{n,c}(\cos(\frac{\theta}{\lambda_j}))$
\begin{eqnarray*}
	&=&\sum_{n=0}^{N_k[\lambda_j]}m\left(\frac{\chi^{1/2}_{n,\alpha}(c)}{\lambda_j}\right)a_n(G_{\lambda_j})\left(\chi_{(0,\lambda_jt_n)}(\theta)\varphi_n(\cos(\frac{\theta}{\lambda_j}))+\chi_{(\lambda_j t_n,\frac{\pi}{2}\lambda_j)}(\theta)\varphi_n(\cos(\frac{\theta}{\lambda_j}))\right)\\&=&\sum_{n=0}^{N_k[\lambda_j]}m\left(\frac{\chi^{1/2}_{n,\alpha}(c)}{\lambda_j}\right)\left(\frac{\chi_{n,\alpha}^{1/4}(c)}{\lambda^2_j}\mathcal{H}_{0}(g)(\frac{\chi^{1/2}_{n,\alpha}(c)}{\lambda_j})+\frac{1}{n^{a}\chi^{1/4}_{n,\alpha}(c)}O(\frac{1}{\lambda_j^b})\right)\chi_{(0,\lambda_j\frac{\pi}{2})}(\theta)\chi_{n,\alpha}^{1/4}(c)J_{0}\left(\frac{\chi_{n,\alpha}^{1/2}(c)}{\lambda}\theta\right)\\&+&\sum_{n=0}^{N_k[\lambda_j]}m\left(\frac{\chi^{1/2}_{n,\alpha}(c)}{\lambda_j}\right)a_n(G_{\lambda_j})\chi_{(\lambda_j t_n,\frac{\pi}{2}\lambda_j)}(\theta)\left((-1)^nB_n\left(\cos(\frac{\theta}{\lambda})\right)^{\alpha+1/2}P_n^{(0,\alpha)}\left(\cos(\frac{2\theta}{\lambda})\right)-\chi_{n,\alpha}^{1/4}(c)J_{0}\left(\frac{\chi_{n,\alpha}^{1/2}(c)}{\lambda}\theta\right)\right)\\&+&O(\frac{1}{\lambda^{\varepsilon}_j})\\&=&\chi_{(0,\frac{\pi}{2}\lambda_j)}(\theta)\frac{1}{\lambda_j}\sum_{n=0}^{N_k[\lambda_j]}m\left(\frac{\chi^{1/2}_{n,\alpha}(c)}{\lambda_j}\right)\frac{\chi^{1/2}_{n,\alpha}(c)}{\lambda_j}J_{0}\left(\frac{\chi^{1/2}_{n,\alpha}(c)}{\lambda_j}\theta\right)\mathcal{H}_{0}(g)(\frac{\chi^{1/2}_{n,\alpha}(c)}{\lambda_j})+O(\frac{1}{\lambda^{\varepsilon}_j}). 
\end{eqnarray*}
where $\varepsilon>0.$ Indeed, from \cite{Szego}, we have
\begin{eqnarray*}
	(-1)^nB_n\left(\cos(\frac{\theta}{\lambda})\right)^{\alpha+1/2}P_n^{(0,\alpha)}\left(\cos(\frac{2\theta}{\lambda})\right)&=&(2n+\alpha+1)^{1/2}\left(\cos(\frac{\theta}{\lambda})\right)^{1/2}\left(\frac{\theta/\lambda}{\sin(\theta/\lambda)}\right)^{1/2}J_0\left(2(n+\frac{\alpha+1}{2})\frac{\theta}{\lambda}\right)\\&+&\frac{1}{\lambda^{1/2}}O(\frac{(2\theta)^{1/2}}{n})\\&=&(2n+\alpha+1)^{1/2}J_0\left((2n+\alpha+1)\frac{\theta}{\lambda}\right)+O(\frac{1}{n}),
\end{eqnarray*}
and by using \eqref{boundschi2}, one gets $\chi_{n,\alpha}(c)\sim (2n+\alpha+1)^2$, and concludes that
$$(-1)^nB_n\left(\cos(\frac{\theta}{\lambda})\right)^{\alpha+1/2}P_n^{(0,\alpha)}\left(\cos(\frac{2\theta}{\lambda})\right)-\chi_{n,\alpha}^{1/4}(c)J_{0}\left(\frac{\chi_{n,\alpha}^{1/2}(c)}{\lambda}\theta\right)=O(\frac{1}{n}).$$
Further, from \cite{Karoui-Boulsane}, we have $\norm{\varphi^{\alpha}_{n,c}}_{L^{\infty}(0,1)}=O(\chi^{1/2}_{n,\alpha}(c))$, then we obtain $a_n(G_{\lambda_j})=O(\frac{\chi^{1/2}_{n,\alpha}(c)}{\lambda^2}).$\\ 
Finally, as $j\to \infty$, we get $F^{N_k}=\mathcal{H}_{0}\left(\chi_{(0,N_k)}m(.)\mathcal{H}_{0}(g)\right).$	
\end{proof}

\end{document}